\def\inte#1{
\displaystyle\mathop{#1\kern0pt}^\circ }
\def\virgp{\raise 2pt\hbox{,}}
\def\cdotpv{\raise 2pt\hbox{;}}
\def\C{\mathop{\mathbb C\kern 0pt}\nolimits}
\def\DD{\mathop{\mathbb D\kern 0pt}\nolimits}
\def\EE{\mathop{{\mathbb E \kern 0pt}}\nolimits}
\def\K{\mathop{\mathbb K\kern 0pt}\nolimits}
\def\N{\mathop{\mathbb N\kern 0pt}\nolimits}
\def\Q{\mathop{\mathbb Q\kern 0pt}\nolimits}
\def\R{\mathop{\mathbb R\kern 0pt}\nolimits}
\def\SS{\mathop{\mathbb S\kern 0pt}\nolimits}
\def\ZZ{\mathop{\mathbb Z\kern 0pt}\nolimits}
\def\TT{\mathop{\mathbb T\kern 0pt}\nolimits}
\def\P{\mathop{\mathbb P\kern 0pt}\nolimits}
\newcommand{\beq}{\begin{equation}}
\newcommand{\eeq}{\end{equation}}
\newcommand{\ben}{\begin{eqnarray}}
\newcommand{\een}{\end{eqnarray}}
\newcommand{\beno}{\begin{eqnarray*}}
\newcommand{\eeno}{\end{eqnarray*}}
\newtheorem{thm}{Theorem}[section]
\newtheorem{lem}{Lemma}[section]
\newtheorem{rmk}{Remark}[section]
\newtheorem{prop}{Proposition}[section]
\newtheorem*{Main Theorem}{Main Theorem}
\newtheorem{theorem}{Theorem}[section]
\newtheorem{definition}[theorem]{Definition}
\newtheorem{remark}[theorem]{Remark}
\numberwithin{equation}{section}
\title[Ill-posedness for 2D Euler and ideal MHD]{The Cauchy problems for the 2D compressible Euler equations and ideal MHD system are ill-posed in $H^\frac{7}{4}(\mathbb{R}^2)$}
\author{Xinliang An$^*$$^1$}\author{Haoyang Chen$^*$$^2$}\author{Silu Yin$^{\dag}$$^3$}
\date{}
\begin{document}
\maketitle

\newcommand\blfootnote[1]{%
\begingroup
\renewcommand\thefootnote{}\footnote{#1}%
\addtocounter{footnote}{-1}%
\endgroup
}
\begin{abstract}
In a fractional Sobolev space $H^s(\mathbb{R}^2)$ with $s\leq\frac74$, we prove the low-regularity ill-posedness for the 2D compressible Euler equations and the 2D ideal compressible MHD system. Our ill-posedness results match the $H^\frac74$ regularity threshold for the 2D compressible Euler system with respect to the fluid velocity and density.
\\

\noindent{\textbf{Keywords:} ill-posedness driven by shock formation, geometric method, 2D compressible Euler equations, 2D ideal compressible MHD system}
\end{abstract}

\tableofcontents
\newpage
\section{Introduction and Main results}
The MHD system describes the dynamics of electrically conducting fluids or plasmas, accounting for the interplay between fluid motion and magnetic fields. Mathematically, it generalizes the compressible Euler equations, recovered when the magnetic field vanishes, and provides a rich example of a nonlinear hyperbolic system with multiple wave speeds and nonlinear coupling. Physically, it models phenomena ranging from astrophysical and solar plasmas to laboratory experiments with conducting fluids, capturing complex interactions between fluid motion and magnetic forces. In this paper, we prove the $H^{\frac{7}{4}-}$ ill-posedness and anisotropic $H^\frac74$ ill-posedness for (local well-posedness of) Cauchy problems of the 2D compressible Euler equations and the 2D ideal compressible MHD system. Both of them are physical systems with multiple wave speeds. The 2D ideal compressible MHD system takes the form:
\begin{equation}\label{MHD}
\left\{\begin{split}
&\partial_t\varrho+\nabla\cdot(\varrho u)=0,\\
&\varrho\{\partial_t+(u\cdot\nabla)\}u+\nabla p+\frac{\mu_0}{2} \nabla |H|^2-\mu_0 H \cdot \nabla H=0,\\
&\partial_t H+(u\cdot \nabla)H-H\cdot \nabla u +H \nabla\cdot u=0,\\
&\partial_t S+(u\cdot\nabla)S=0,
\end{split}
\right.
\end{equation}
where $\mu_0$ is the magnetic permeability constant, $\varrho$ is the fluid density\footnote{In our analysis, the value of $\varrho$ is close to $1$.}, $u=(u_1,u_2) \in \mathbb{R}^2$ is the fluid velocity, $H=(H_1,H_2) \in \mathbb{R}^2$ is the magnetic field intensity, $S$ is the entropy and $p$ is the pressure satisfying the equation of state $p=p(\varrho,S).$ The magnetic field $H$ verifies the following constraint equation:
\begin{equation} \label{constraint H}
\nabla\cdot H=0.
\end{equation}
Taking the divergence of the third equation in \eqref{MHD}, one can verify that once imposed at the initial data level, this constraint is automatically propagated in time. We consider the polytropic gas. The pressure $p(\varrho,S)$ obeys the following equation of state
\begin{equation*}
p=A e^S\varrho^\gamma,
\end{equation*}
where $A$ is a positive constant and $\gamma>1$ is the adiabatic gas constant.

If the magnetic field vanishes, i.e., $H_1=H_2\equiv0$, the above system \eqref{MHD} reduces to the following 2D compressible Euler equations allowing non-trivial entropy and vorticity
\begin{equation}\label{euler1}
\left\{\begin{split}
&\partial_t\varrho+\nabla\cdot(\varrho u)=0,\\
&\varrho\{\partial_t+(u\cdot\nabla)\}u+\nabla p=0,\\
&\partial_t S+(u\cdot\nabla)S=0.
\end{split}
\right.
\end{equation}
For the 2D compressible Euler equations, the low-regularity local well-posedness result was proven by Zhang in \cite{zhang}. In particular, Zhang proved the local well-posedness in $H^s$ for $s>\frac74$ with respect to the regularity of fluid velocity, vorticity and density. Therefore, according to the corresponding low-regularity local well-posedness result by Zhang \cite{zhang}, the Cauchy problems of the 2D compressible Euler equations is anticipated to be ill-posed at the threshold regularity $H^\frac{7}{4}$. See Remark \ref{rk euler} for a more detailed discussion.

In this paper, we first show that the Cauchy problems of the 2D ideal compressible MHD equations \eqref{MHD} are (isotropically) ill-posed in the fractional Sobolev space $H^s(\mathbb{R}^2)$ for $s<\frac74$ and anisotropically ill-posed in $H^\frac74(\mathbb{R}^2)$. Furthermore, this ill-posedness is driven by the instantaneous shock formation. To our best knowledge, this is the first nonlinear result of low regularity ill-posedness for the 2D ideal compressible MHD. In the absence of the magnetic field, the ideal MHD system reduces to the compressible Euler equations. Our work thus also serves as the first low-regularity ill-posedness result for the 2D compressible Euler equations \eqref{euler1}. 

\subsection{Background and history}
Our research is motivated by a series of classic works on low-regularity ill-posedness and shock formation. We first review studies on low-regularity ill-posedness. Under planar symmetry, Lindblad \cite{lindblad93,Lind96,Lind98} constructed sharp counterexamples to the local well-posedness for semilinear and quasilinear wave equations in three dimensions. Low-regularity local well-posedness for the quasilinear wave equation was proven by Smith-Tataru \cite{tataru}. They showed that for $n$ dimensional quasilinear wave equations, the Cauchy problems are locally well-posed in $H^s(\mathbb{R}^n)$ with $s>n/2+3/4$ for $n=2$ and $s>(n+1)/2$ for $n=3,4,5$. For $n=3$, via the vector field method, Wang \cite{Wang} gave a different proof and re-obtained Smith-Tataru's conclusion. See also Zhou-Lei \cite{Zhou2} for three dimensional radially symmetric global solutions in low regularity. For higher spatial dimension ($n\geq 3$), we refer to locally well-posed in $H^s$ with $s>n/2+2/3$ by Tataru \cite{tataru2}, and recent work of Wang \cite{cbwang} on sharp local well-posedness for spherically symmetric quasilinear wave equations. In \cite{lindblad17}, Ettinger-Lindblad generalized the above results to the Einstein's equations and constructed a sharp counterexample for local well-posedness of Einstein vacuum equations in wave coordinates. An exploration by Granowski \cite{Ross} later showed that Lindblad's ill-posedness in \cite{Lind98} is stable under perturbations out of planar symmetry. In \cite{an,an2,an suv}, we generalized Lindblad's work on a scalar wave equation in \cite{Lind98} by showing that the Cauchy problems for 3D elastic waves and 3D ideal compressible MHD system are ill-posed in $H^3(\mathbb{R}^3)$ and $H^2(\mathbb{R}^3)$, respectively. In the absence of magnetic fields, the $H^2$ ill-posedness in \cite{an2} also holds for 3D compressible Euler equations. According to the low-regularity local well-posedness result in \cite{Wang19} by Wang \footnote{See also \cite{Disconzi} by Disconzi-Luo-Mazzone-Speck and \cite{zhang-andersson} by Zhang-Andersson.}, this $H^2$ ill-posedness is sharp with respect to the regularity of the fluid velocity $u$ and density $\varrho$. Unlike the 3D case, in 2D, as suggested by Smith-Tataru \cite{tataru}, the desired sharp result would be in the fractional Sobolev space $H^\frac{7}{4}(\mathbb{R}^2)$. Recently, Ohlmann \cite{Ohl} generalized Lindblad's result \cite{Lind98} to the 2D case and proved the ill-posedness for a 2D quasilinear wave equation\footnote{Ohlmann studied the model equation $\Box u=(Du)D^2 u$, with $D=\partial_{x_1}-\partial_t$.} in the logarithmic Sobolev space $H^{\frac{7}{4}}(\ln H)^{-\beta}$ with $\beta>\frac12$. This space is slightly more singular\footnote{There is a logarithmic loss between Ohlmann's result and the desired $H^{\frac{7}{4}}$ ill-posedness.} than $H^{\frac{7}{4}}$. For 2D Euler equations, the low-regularity local well-posedness was proved by Zhang \cite{zhang} in $H^s$ with $s>\frac{7}{4}$ for initial fluid velocity, vorticity and density. In this paper, we will derive the $H^{\frac{7}{4}-}$ ill-posedness and anisotropic $H^\frac74$ ill-posedness for 2D compressible Euler equations and for the more general 2D ideal MHD system. We employ the wave-decomposition approach introduced by John \cite{john74} to decompose our system. Compared with the 3D case, the decomposed system here is equipped with a different structure, which is owing to the correct selection of the eigenvectors for the coefficient matrix in 2D. Using the initial-data construction adopted in this paper, in \cite{an3} we also report the $H^{\frac{11}{4}}$ ill-posedness for the 2D elastic wave system.

For the incompressible case, the corresponding low-regularity ill-posedness in Sobolev spaces was first proved by Bourgain-Li \cite{bourgain-li3} in 2D and \cite{bourgain-li} in 3D. We also refer to Elgindi-Jeong \cite{elgindi} for a different proof in 2D, and refer to Bourgain-Li \cite{bourgain-li2} for ill-posedness in $C^m$ spaces.

As mentioned above, our ill-posedness is driven by the instantaneous shock formation. The studies on shock formation for Euler equations and other hyperbolic systems have a long history. In 1D case, the classical method is to analyze along the characteristics. For compressible Euler equations, Riemann \cite{riemann} proved the finite-time shock formation via using the so-called Riemann invariants. This was later extended to $2\times 2$ genuinely nonlinear strictly hyperbolic system by Lax \cite{lax}. For larger $n \times n$ hyperbolic systems, John \cite{john74} developed the decomposition--of--waves method to prove shock formation for genuinely nonlinear strictly hyperbolic system. See also \cite{liu} by Liu and \cite{Zhou} by Zhou for generalizations of John's approach to linearly degenerate hyperbolic systems. In \cite{christodoulou}, Christodoulou-Perez also studied the shock formation for electromagnetic plane waves in nonlinear crystals. The equations of the electromagnetic plane waves satisfy a first-order strictly hyperbolic and genuinely nonlinear system. In more than one spatial dimensions, with no symmetry assumption, Alinhac \cite{Alinhac99,Alinhac99II} proved singularity formation for solutions to quasilinear wave equations via a Nash-Moser iteration scheme. Based on this approach, Yin-Zheng-Jin \cite{yin} constructed the blow-up solution for 2D irrotational compressible Euler equations. This approach does not reveal information beyond the first blow-up point. In \cite{christodoulou10}, Christodoulou developed a geometric approach and provided a detailed understanding and a complete description of shock formation for 3D irrotational compressible Euler equations. This work was later extended to a large class of equations, see \cite{speckbk,Speck16,Speck18,Speck-luk,Speck-luk2,miao,christodoulou-miao}. In particular, applying Christodoulou's geometric approach, Luk-Speck\cite{Speck-luk,Speck-luk2} constructed shock formation for 2D and 3D compressible Euler equations, allowing the presence of non-trivial vorticity. Buckmaster-Shkoller-Vicol \cite{buckmaster1,buckmaster} also gave a different approach via using self-similar variables. For 2D compressible MHD, very little is known about its shock formation and the corresponding low-regularity solutions. For its small data global well-posedness results, we refer to Hu \cite{hu}, Wu-Wu \cite{wu} and references therein. We also refer to Wu-Zhu \cite{wu2} for global well-posedness result in a periodic domain.

\subsection{Main results}
We construct counterexamples to low-regularity $H^s$ (with $s\leq\frac74$) local well-posedness for the 2D ideal compressible MHD system \eqref{MHD}. The ill-posedness is driven by the instantaneous shock formation. In particular, we establish the shock formation in a planar symmetric region. This means we focus on a planar symmetric region to show shock formation, while the initial data remain non-symmetric and compactly supported in the entire space. Under plane symmetry, by Gauss's law, the first component $H_1$ of the magnetic field remains constant. Denoting this constant by $\kappa$, we proceed with the $\kappa\neq0$ case and the $\kappa=0$ case, respectively. In the former case, the system is strictly hyperbolic. While for $\kappa=0$, it is not\footnote{For $\kappa=0$ case, see Section \ref{h10} for the details.}. Besides $H_1$, the remaining unknowns $\Phi=(u_1,u_2,\varrho-1, H_2,S)^T$ then satisfy a quasilinear hyperbolic system stated in \eqref{1order}. Our main theorem is stated as below.
\begin{thm} \label{2D}
We say that the Cauchy problem of the 2D ideal compressible MHD equations \eqref{MHD} is ill-posed in $H^s(\mathbb{R}^2)$ if:
There exists a family of compactly supported, smooth initial data satisfying
\begin{equation} \label{zero data}
{\Big\|\big(H_{1}^{(\eta)}(x_1,x_2,0)-\kappa,\Phi^{(\eta)}(x_1,x_2,0)\big)\Big\|}_{{{H}}^s(\mathbb{R}^2)} \to 0\quad \text{as} \quad \eta\to0
\end{equation}
with $\kappa$ denoting the constant for equilibrium state, and $\eta>0$ being a small parameter. For each initial datum, there exists finite $T_\eta^*>0$ such that the corresponding Cauchy problem of the 2D ideal MHD system \eqref{MHD} admits a local-in-time regular solution $(H_1^{(\eta)},\Phi^{(\eta)})\in C^\infty\big(\mathcal{D}(\Omega_0;T_\eta^*)\big)$, and blow up in $H^s$ at $T_\eta^*$. Here, $\Omega_0$ (defined in \eqref{init domain}) denotes the spatial region where the initial data are planar symmetric, and $\mathcal{D}(\Omega_0;T_\eta^*)$ denotes the domain of dependence between $[0,T_\eta^*)$, in which the solution is purely determined by initial data in $\Omega_0$.

In this sense, we establish both the $H^{\frac74-}$ ill-posedness and the anisotropic\footnote{Specifically, the Cauchy problem is ill-posed in the anisotropic Sobolev space $H^{\frac74-\epsilon,\epsilon}$ for any small parameter $\epsilon>0$. See Section \ref{data} for the definition of anisotropic Sobolev space.} $H^\frac74$ ill-posedness to the local well-posedness of the 2D ideal compressible MHD system with the following statements$H^{\frac74}$:
\begin{itemize}
\item[\textup{i)}]{\bf(Instantaneous shock formation)} For each solution $\Phi^{(\eta)}$, a shock forms at $T_\eta^*$. More precisely, we have that $|\Phi^{(\eta)}|$ remains small, while $|\nabla \Phi^{(\eta)}|$ blows up at $T_\eta^*$. Moreover, the lifespan vanishes as $\eta \to 0$, i.e., $T_\eta^* \to 0$, which means that the constructed initial data $\Phi_0^{(\eta)}$ give rise to the instantaneous shock formation.

\item[\textup{ii)}]{\bf(Blow-up of $H^1$-norm)} The $H^1$-norm of the solution $\Phi^{(\eta)}$ blows up at $T^*_{\eta}$. In particular, we have
\begin{equation*} 
      \lim\limits_{t\to T_\eta^{*}}\|\nabla u_1^{(\eta)}(\cdot,t)\|_{L^2(\Omega_{t})}=+\infty,\quad     \lim\limits_{t\to T_\eta^{*}}\|\nabla \varrho^{(\eta)}(\cdot,t)\|_{L^2(\Omega_{t})}=+\infty,
\end{equation*}
where $\Omega_{t}$ denotes the $t$-slice of $\mathcal{D}(\Omega_0;T_\eta^*)$.
\end{itemize}

For the isotropic $H^\frac74$ case, we have the following ill-posedness: there exists a family of initial data whose $H^\frac74$ norms shrink to $0$ as $\eta\to0$, yet the (first) shock of the corresponding solutions form in a future time $T_\eta^*$ for every $\eta$.\footnote{See Remark \ref{weak ill-posedness} for the details.}
\end{thm}

\begin{remark}
In our proof, under plane symmetry, we algebraically reformulate the MHD system \eqref{MHD} via a wave-decomposition approach. The success of our analysis crucially relies on a key fact that all the coefficients of the decomposed hyperbolic system are uniformly bounded. For a real physical system without any imposed mathematical condition on its structure, it is far from trivial that the coefficients $c^i_{im}$, $\gamma^i_{im}$, $\gamma^i_{km}$ of the decomposed system defined in \eqref{coec}-\eqref{coeg2} are bounded. Our carefully designed eigenvectors \eqref{regv}-\eqref{legv} ensure the required boundedness. Instead of directly verifying around 300 coefficients as for 3D in \cite{an2}, in this paper, for the 2D case, we find a concise proof via analyzing the leading-order terms of the potential problematic coefficients. See Section \ref{sbd} for the discussion.
\end{remark}
\begin{remark} \label{rk data}
To construct suitable initial data, we get inspired by and extend the ideas in Lindblad \cite{Lind96,Lind98}, Ohlmann \cite{Ohl} and An-Chen-Yin \cite{an2}. In our ansatz, the initial data take the form
\begin{equation*}
w_1^{(\eta)}(x_1,x_2,0)=-\theta |\ln (x_1)|^\alpha \mathcal{X}(\frac{x_1}{\eta})\psi\Big(\frac{|\ln(x_1)|^\delta x_2}{\sqrt{x_1}}\Big).
\end{equation*}
where $\theta,\alpha,\eta,\delta$ are constant parameters, and $\mathcal{X},\psi$ are certain functions of their arguments. Here $w_1^{(\eta)}$ is a certain combination of $\Phi_{x_1}=(\partial_{x_1}u_1,\partial_{x_1}u_2,\partial_{x_1}\varrho, \partial_{x_1}H_2,\partial_{x_1}S)^T$. Using these functions and the finite propagation speed argument in Appendix \ref{FSP}, we (spatially) localize the domain of dependence to a neighborhood of $x_2=0$, and conduct the shock formation argument within the planar symmetric portion. Moreover, to establish ill-posedness at the threshold regularity $H^\frac34$, we carefully analyze the relationship among these parameters, and design the cut-off functions $\mathcal{X},\psi$. We refer readers to Section \ref{data} for the detailed construction. 
\end{remark}
\begin{remark}
   Building on the ill-posed data of $w_1^{(\eta)}$, we can further construct counterexamples to local existence of low regularity solutions by specifying a rapidly decaying sequence of $\{\eta_m\}$ and adding the data of $w_1^{(\eta_m)}$, the resulting sum remains uniformly bounded in the Sobolev space, but no solution exists for any $T>0$ (because $T_{\eta_m}^* \to 0$). See Section \ref{data} for details. 
\end{remark}
\begin{remark}
    In order to find suitable $H_1$ to fulfill the constraint $\nabla\cdot H=0$, especially in the non-symmetric region, we solve the following transport equation at $t=0$ for each fixed $x_2$ : 
    $$\partial_{x_1} H_1(x_1,x_2,0)=-\partial_{x_2}H_2(x_1,x_2,0).$$
   Here, we set the initial data of $H_2$ to be compactly supported, and odd with respect to $x_1$. With $H_2$ given, we solve the above equation for the initial data of $H_1-\kappa$, which are compactly supported within a small domain. For further details, see Section \ref{data}.
\end{remark}
\begin{remark}
Besides the just-mentioned constructed initial data, compared with Ohlmann's approach in \cite{Ohl}, our result also exhibits the following differences. First, we investigate the \underline{physical} MHD system \eqref{MHD} and the Euler equations \eqref{euler1}. The specific structures of these two systems are \underline{crucial} for our analysis. Ohlmann \cite{Ohl} studied a quasilinear equation
\begin{equation*}
  \Box u=(Du)D^2 u, \quad \text{with} \ D=\partial_{x_1}-\partial_t.
\end{equation*}
Second, in \cite{Ohl} by Ohlmann, there is a logarithmic loss for the regularity, i.e., the ill-posedness in \cite{Ohl} holds in the logarithmic Sobolev space $H^{\frac{7}{4}}(\ln H)^{-\beta}$ with $\beta>\frac12$. For the 2D ideal MHD system and the Euler equations, we provide a new proof of the estimate for Sobolev norm by using interpolation theorem in the physical space, and prove the $H^s$ ill-posedness for $s<\frac74$ (isotropic ill-posedness) and the anisotropic $H^\frac74$ ill-posedness in the threshold regularity. Third, in our work, the underlying mechanism behind our ill-posedness is explored. And we demonstrate that the ill-posedness results from the instantaneous shock formation.
\end{remark}
As a corollary of the above theorem, in the absence of magnetic fields, i.e., $H_1=H_2\equiv0$, we have the following ill-posedness result for the 2D compressible Euler equations \eqref{euler1} allowing non-trivial entropy and vorticity.
\begin{thm} \label{sharpeuler}
For the 2D compressible Euler equations \eqref{euler1}, the Cauchy problem is ill-posed in $H^{\frac74-}$ and anisotropically ill-posed in $H^\frac74$, with respect to the fluid velocity and density. More precisely, there exits a family of compactly supported smooth initial data for \eqref{euler1} satisfying
$${\|\varrho_0^{(\eta)}\|}_{X}+{\|u_0^{(\eta)}\|}_{X}+{\|S_0^{(\eta)}\|}_{X} \to 0\quad \text{as} \quad \eta\to0,$$
where $X={H}^{\frac74-}(\mathbb{R}^2)$ or ${H}^{\frac74-\epsilon,\epsilon}(\mathbb{R}^2)$ (see Definition \ref{aniso space}), and the subscript ``0" denotes the initial data. After a finite time $T_\eta^*$, a singularity forms in $\mathcal{D}(\Omega_0;T_\eta^*)$ and the solution ceases to be smooth. Tied to this family of initial data, this singularity formation is instantaneous, i.e., $T_\eta^* \to 0$ as $\eta \to 0$. Furthermore, at $T^*_\eta>0$, the singularity corresponds to the (first) shock in planar symmetric region. Moreover, the $H^1$-norm of the velocity $u_1$ and density $\varrho$ blow up at the shock formation time $T^*_{\eta}$:
\begin{equation*}
 \lim\limits_{t\to T_\eta^{*}} \|u_{1}^{(\eta)}(\cdot,t)\|_{H^1(\Omega_t)}=+\infty,\quad \lim\limits_{t\to T_\eta^{*}}\|\varrho^{(\eta)}(\cdot,t)\|_{H^1(\Omega_t)}=+\infty.
\end{equation*}
\end{thm}
\begin{remark} \label{rk euler}
For the 2D compressible Euler equations, as is well-known, the classical local well-posedness result holds for initial data in $H^s$ with $s>2$. Recently, Zhang \cite{zhang} improved the classical result by proving the low regularity local well-posedness in $H^s$ space with $s>\frac{7}{4}$ for fluid velocity, vorticity and density. Zhang's result still leave room for improvement regarding the regularity of fluid vorticity. See the following two pictures:
\begin{figure}[H]
\centering
\begin{minipage}[t]{0.48\textwidth}
\centering
\begin{tikzpicture}[scale=0.9, fill opacity=0.5, draw opacity=1, text opacity=1]
\filldraw[white, fill=gray!40](1.5,1)--(1.5,4)--(5,4)--(5,1)--(1.5,1);
\filldraw[gray!40, fill=gray!80](1,1)--(1,4)--(1.5,4)--(1.5,1)--(1,1);
\draw[->](0,-0.3)--(5.5,-0.3);
\filldraw(5,-0.3)node[below]{\footnotesize $\tilde{s}$: regularity for the density};
\draw[->](0,-0.3)--(0,4)node[right,above]{\footnotesize $s'$: regularity for the vorticity};
\draw[dashed](1,0.5)--(5,0.5);
\draw[dashed](1,1)--(5,1);
\draw[dashed](5,0.5)--(5,4);
\draw[dashed](1.5,1)--(1.5,4);
\draw[dashed](1,4)--(5,4);
\filldraw [thick,black] (1,0.5) circle [radius=0.8pt];
\filldraw [thick,black] (1,1) circle [radius=0.8pt];
\filldraw [thick,black] (1,-0.3) circle [radius=0.8pt]node[below]{$\frac74$};
\filldraw [thick,black](1.5,-0.3) circle [radius=0.8pt]node[below]{$2$};

\filldraw [thick,black](0,1) circle [radius=0.8pt]node[left]{$\frac74$};
\filldraw [thick,black](0,0.5) circle [radius=0.8pt]node[left]{$1$};
\filldraw(1.65,1.9)circle node[right]{\scriptsize LWP by \cite{zhang}};
\filldraw(1.9,3)circle node[right]{\scriptsize Classical LWP};
\filldraw (2,2) circle;
\draw[->](1.75,1.9)--(1.25,1.9);
\draw[->](-0.1,2.25)--(0.95,2.25);
\filldraw(0,2.5)node[left]{\footnotesize An-Chen-Yin's };
\filldraw(0,2.2)node[left]{\footnotesize ill-posedness};
\filldraw(0,1.9)node[left]{\footnotesize (Theorem \ref{sharpeuler})};
\draw[very thick,black](1,0.5)--(1,4);
\filldraw(2.7,0.75)circle node[right]{?};
\end{tikzpicture}
\caption{Vorticity regularity $s'$ \textit{\bf vs.} density regularity $\tilde{s}$.}
\end{minipage}
\begin{minipage}[t]{0.48\textwidth}
\centering
\begin{tikzpicture}[scale=0.9, fill opacity=0.5, draw opacity=1, text opacity=1]
\filldraw[white, fill=gray!40](1.5,0.5)--(1.5,4)--(5,4)--(1.5,0.5);
\filldraw[gray!40, fill=gray!80](1,1)--(1,4)--(1.5,4)--(1.5,1)--(1,1);
\draw[->](0,-0.3)--(5.5,-0.3);
\filldraw(5,-0.3)node[below]{\footnotesize $s$: regularity for the velocity};
\draw[->](0,-0.3)--(0,4)node[right,above]{\footnotesize $s'$: regularity for the vorticity};
\draw(1.5,0.5)--(5,4);
\draw[dashed](1.5,0.5)--(1.5,4);
\draw[dashed](1,1)--(1.5,1);
\draw[dashed](1,4)--(5,4);
\filldraw [thick,black] (1.5,0.5) circle [radius=0.8pt];
\filldraw [thick,black] (1,1) circle [radius=0.8pt];
\filldraw [thick,black] (1,0.5) circle [radius=0.8pt];
\filldraw [thick,black] (1,-0.3) circle [radius=0.8pt]node[below]{$\frac{7}{4}$};
\filldraw [thick,black](1.5,-0.3) circle [radius=0.8pt]node[below]{$2$};

\filldraw [thick,black](0,0.5) circle [radius=0.8pt]node[left]{1};
\filldraw [thick,black](0,1) circle [radius=0.8pt]node[left]{$\frac74$};

\draw[thick,black](1,0.5)--(1.5,0.5);
\draw[very thick,black](1,0.5)--(1,4);
\filldraw(1.65,1.9)circle node[right]{\scriptsize LWP by \cite{zhang}};
\draw[->](1.75,1.9)--(1.25,1.9);
\filldraw(1.65,3)circle node[right]{\scriptsize Classical LWP};
\filldraw(1.65,0.7)circle node[right]{\scriptsize Ill-posedness by \cite{bourgain-li3}\cite{elgindi} };
\filldraw(1.05,0.75)circle node[right]{?};
\filldraw (2,2) circle;
\draw[->](1.75,0.3)--(1.25,0.45);
\filldraw(1.65,0.35)node[right]{\scriptsize (incompressible 2D Euler)};
\draw[->](-0.1,2.25)--(0.95,2.25);
\filldraw(0,2.5)node[left]{\footnotesize An-Chen-Yin's };
\filldraw(0,2.2)node[left]{\footnotesize ill-posedness};
\filldraw(0,1.9)node[left]{\footnotesize (Theorem \ref{sharpeuler})};
\end{tikzpicture}
\caption{Vorticity regularity $s'$ \textit{\bf vs.} velocity regularity $s$.}
\end{minipage}
\end{figure}
\noindent Note that in the above figures, when the vorticity regularity $s'$ is fixed, the regularity of velocity can be at most $s'+1$. In contrast, the density regularity $\tilde{s}$ is not necessarily related to $s'$. Our ill-posedness result demonstrates that the threshold regularity is $H^\frac74$ (with respect to the fluid velocity and density). 
\end{remark}
\subsection{Organization of the paper}
As in \cite{an2,Lind96,Lind98}, we first investigate the shock formation under plane symmetry. Suppose that $U(x_1,t)=(u_1,u_2,\varrho,H_1,H_2,S)(x_1,t)$ is a planar symmetric solution to \eqref{MHD}. Then, by Gauss's Law, we have that $H_1\equiv \kappa$. The ideal MHD system \eqref{MHD} is then reduced to a $5\times 5$ first-order hyperbolic system:
\begin{equation*}
        \partial_t\Phi+A(\Phi)\partial_{x_1}\Phi=0
\end{equation*}
with $\Phi=(u_1,u_2,\varrho-1,H_2,S)^T$. Then we algebraically decompose the system via employing John's wave-decomposition method in \cite{john74}. The decomposed system satisfies \eqref{eqrho}-\eqref{eqv}, which forms a transport system along different characteristics. In Section \ref{pre}, we carefully explore the structures of the corresponding system.

In Section \ref{data}, we construct vanishing $H^s$ initial data (for $s\leq\frac74$) to establish the ill-posedness for the Cauchy problems of \eqref{MHD}. Our construction of initial data here is inspired by Lindblad \cite{Lind96,Lind98}, Ohlmann \cite{Ohl} and An-Chen-Yin \cite{an2}. We further modify and extend the construction in \cite{an2,Lind96,Lind98,Ohl} by carefully analyzing the relationships among parameters and the regularity along different directions.

We first consider the $\kappa\neq0$ case. Based on the decomposition of waves, in Section \ref{aprior} we prove the (first) shock formation at time $T_\eta^*<+\infty$. And we exhibit the detailed quantitative information all the way up to the shock formation time $T_\eta^*$. The key point here is to trace the evolution of $\rho_i$ ($i=1,\cdots,5$), the so-called inverse density of the $i^{\text{th}}$ characteristics. We derive a positive uniform lower bound for $\{\rho_i\}_{i=2,\cdots,5}$. Whereas, for the first family of characteristics, its inverse density $\rho_1(z_0,t)\to 0$ as $t\to T_\eta^*$, which indicates the compression of characteristics and whence the shock formation.

Prescribing the aforementioned initial data, we have $T_\eta^*\to 0$ as $\eta \to 0$. In Section \ref{ill}, we then estimate the $H^1(\mathbb{R}^2)$ norm of the solution to \eqref{MHD} at time $T_\eta^*$. In a suitable constructed spatial region $\Omega_{T^*_\eta}$, we deduce that $\|\Phi(\cdot,T^*_\eta)\|_{H^1(\Omega_{T^*_\eta})}^2=+\infty$. This further implies that the $H^\frac{7}{4}$ norm also blows up within $\Omega_{T^*_\eta}$, and the desired ill-posedness result. Furthermore, both the inflation of $H^1$ norm and the vanishing of $T_\eta^*$ are driven by the instantaneous shock formation: $\rho_1(z_0,t)\to 0$ as $t\to T_\eta^*.$

In a similar manner, we treat the non-strictly hyperbolic $\kappa=0$ case. The details are in Section \ref{h10}. Finally, in Section \ref{euler ill}, we apply the above method and results for the 2D compressible Euler equations and obtain the aforementioned ill-posedness results.

In Appendix \ref{FSP}, we show that the 2D ideal MHD system (without symmetry condition) obeys finite speed of propagation.

\subsection{Acknowledgements}
XA is supported by MOE Tier 1 grants A-0004287-00-00, A-0008492-00-00 and MOE Tier 2 grant A-8000977-00-00. HC acknowledges the support of MOE Tier 2 grant A-8000977-00-00 and NSFC (Grant No. 12171097). SY is supported by NSFC under Grant No. 12001149.

\section{Preliminaries} \label{pre}
In this section, we apply the wave-decomposition method to the 2D ideal compressible MHD system \eqref{MHD} and analyze its structure.

\subsection{The corresponding hyperbolic system}
We establish the ill-posedness under planar symmetry. The planar symmetric solution of \eqref{MHD} satisfies $$U(x_1,x_2,t)=(u_1,u_2,\varrho,H_1,H_2,S)(x_1,x_2,t)=(u_1,u_2,\varrho,H_1,H_2,S)(x_1,t).$$
 Using Gauss's Law, i.e., the last equation in \eqref{MHD}, we have
\begin{equation*}
\nabla\cdot H=\frac{\partial H_1}{\partial x_1}+\frac{\partial H_2}{\partial x_2}=\frac{\partial H_1}{\partial x_1}=0, \quad \text{hence}\,\, H_1(x_1,t)=H_1(t).
\end{equation*}
Moreover, following from the third equation of \eqref{MHD}, it holds that $\partial_t H_1=0.$ Thus, $H_1(x_1,t)$ remains to be a constant $\kappa$ during the evolution. For notational simplicity, we denote $x_1$ by $x$ hereafter. In this paper, we assume that the constant $\kappa$ is sufficiently small such that
\begin{equation} \label{h1}
\kappa^2 \ll \min\{\mu_0^{-1}A\gamma,1\},
\end{equation}
where $\mu_0$ denotes the magnetic permeability constant, $A$ is a positive constant and $\gamma>1$ is the adiabatic gas constant. Via \eqref{MHD}, the remaining unknowns $(u_1,u_2,\varrho,H_2,S)^T$ then satisfy the following system:
\begin{equation}\label{eq}
\left\{\begin{split}
&\partial_t u_1+\ u_1\partial_{x} u_1+\frac{c^2}{\varrho} \partial_{x}\varrho+\frac{\mu_0H_2}{\varrho}\partial_{x}H_2+\frac{c^2}{\gamma} \partial_{x} S=0,\\
&\partial_t u_2+ u_1\partial_{x} u_2-\frac{\mu_0H_1}{\varrho}\partial_{x_1}H_2=0,\\
&\partial_t\varrho+\varrho\partial_{x} u_1+u_1\partial_{x}\varrho=0,\\
&\partial_t H_2+ u_1\partial_{x} H_2+H_2\partial_xu_1-H_1\partial_xu_2=0,\\
&\partial_t S+u_1\partial_xS=0,
\end{split}\right.
\end{equation}
where $c=\sqrt{\partial_\varrho p}$ is the sound speed. Let $\Phi=(u_1,u_2,\varrho-1, H_2,S)^T$. The quasilinear hyperbolic system \eqref{eq} then takes the form below
\begin{equation}\label{1order}
        \partial_t\Phi+A(\Phi)\partial_x\Phi=0,
\end{equation}
where
\begin{equation*} 
A(\Phi)=\begin{pmatrix}
u_1 & 0 & \frac{c^2}{\varrho} & \frac{\mu_0H_2}{\varrho} &  \frac{c^2}{\gamma} \\
0 & u_1 & 0 & -\frac{\mu_0H_1}{\varrho} & 0 \\
\varrho & 0 & u_1 & 0  & 0 \\
H_2 & -H_1 & 0 &u_1 & 0\\
0 & 0  & 0 & 0 &  u_1
\end{pmatrix}.
\end{equation*}
Via calculations, we have that the eigenvalues of the coefficient matrix $A(\Phi)$ are
\begin{equation}\label{egvl}
\begin{split}
\lambda_1=u_1+C_f,\quad \lambda_2=u_1+C_s,\quad \lambda_3=u_1,\quad
\lambda_4=u_1-C_s,\quad  \lambda_5=u_1-C_f
\end{split}
\end{equation}
with
\begin{equation*} 
\begin{split}
&C_f=\Big\{\frac{\mu_0}{2\varrho}(H_1^2+H_2^2)+\frac{c^2}{2}+\frac12\sqrt{[\frac{\mu_0}{\varrho}(H_1^2+H_2^2)+c^2]^2-\frac{4\mu_0}{\varrho}H_1^2c^2}\Big\}^{1/2},\\
&C_s=\Big\{\frac{\mu_0}{2\varrho}(H_1^2+H_2^2)+\frac{c^2}{2}-\frac12\sqrt{[\frac{\mu_0}{\varrho}(H_1^2+H_2^2)+c^2]^2-\frac{4\mu_0}{\varrho}H_1^2c^2}\Big\}^{1/2}.
\end{split}
\end{equation*}
A quasilinear hyperbolic system is called strictly hyperbolic if the eigenvalues of its coefficient matrix are all real and distinct. We first consider the case of $H_1=\kappa\neq0$. We remark that $\kappa$ is a fixed non-zero constant, which is small relative to the physical constant in \eqref{h1}. However, compared with the small parameters to appear in subsequent analysis of shock formation and ill-posedness, it is treated as an $O(1)$ constant. Here, $O(\cdot)$ denotes the Landau notation defined on the whole time interval $[0,T)$, meaning ``$\leq C\cdot$" for some uniform constant $C>0$ independent of $T$. For this case, provided that $|\Phi|<2\delta$ with $\delta$ being sufficiently small, the ideal MHD system \eqref{1order} is strictly hyperbolic, namely the above five eigenvalues are completely distinct,
\begin{align*}
    \lambda_5(\Phi)< \lambda_4(\Phi)<\lambda_3(\Phi)<\lambda_2(\Phi)<\lambda_1(\Phi).
\end{align*}
We then choose the right eigenvectors of $A(\Phi)$ as follows:
\begin{equation}\label{regv}
\begin{split}
&r_1=\left(\begin{array}{cc}
1\\
-\frac{(C_f^2 -c^2)H_1}{C_f^2H_2}\\
\frac{\varrho }{C_f}\\
\frac{\varrho(C_f^2 -c^2)}{\mu_0C_fH_2}\\
0
\end{array}\right),
\ r_2=\left(\begin{array}{cc}
H_2\\
-\frac{(C_s^2 -c^2)H_1}{C_s^2}\\
\frac{\varrho H_2}{C_s}\\
\frac{\varrho(C_s^2 -c^2)}{\mu_0C_s}\\
0
\end{array}\right),
\ r_3=\left(\begin{array}{cc} 0\\ 0\\ -\frac{\varrho}{\gamma}\\0\\1\end{array}\right), \\
& r_4=\left(\begin{array}{cc}
H_2\\
-\frac{(C_s^2 -c^2)H_1}{C_s^2}\\
-\frac{\varrho H_2}{C_s}\\
-\frac{\varrho(C_s^2 -c^2)}{\mu_0C_s}\\
0
\end{array}\right),
\ r_5=\left(\begin{array}{cc}
1\\
-\frac{(C_f^2 -c^2)H_1}{C_f^2H_2}\\
-\frac{\varrho }{C_f}\\
-\frac{\varrho(C_f^2 -c^2)}{\mu_0C_fH_2}\\
0
\end{array}\right).
\end{split}
\end{equation}
The corresponding left eigenvectors are set to be dual to the right ones satisfying:
\begin{equation}\label{lr}
l_ir_j=\delta_{ij}\quad \text{for}\quad i,j=1,\cdots,5.
\end{equation}
Here $\delta_{ij}$ is the Kronecker symbol. Specifically, we list these $l_i$:
\begin{equation} \label{legv}
\begin{split}
&l_1=\frac{C_f^4H_2}{2H_2^2C_f^4+2H_1^2(C_f^2-c^2)^2}\Big(H_2,-\frac{(C_f^2-c^2)H_1}{C_f^2},\frac{c^2H_2}{\varrho C_f}, \frac{C_f^2-c^2}{C_f },\frac{c^2H_2}{\gamma C_f}\Big),\\
&l_2=\frac{C_s^4H_2}{2H_2^2C_s^4+2H_1^2(C_s^2-c^2)^2}\Big(1,-\frac{(C_s^2-c^2)H_1}{C_s^2H_2},\frac{c^2}{\varrho C_s}, \frac{C_s^2-c^2}{C_s H_2},\frac{c^2}{\gamma C_s}\Big),\\
&l_3=(0,0,0,0,1),\\
&l_4=\frac{C_s^4H_2}{2H_2^2C_s^4+2H_1^2(C_s^2-c^2)^2}\Big(1,-\frac{(C_s^2-c^2)H_1}{C_s^2H_2},-\frac{c^2}{\varrho C_s}, -\frac{C_s^2-c^2}{C_s H_2},-\frac{c^2}{\gamma C_s}\Big),\\
&l_5=\frac{C_f^4H_2}{2H_2^2C_f^4+2H_1^2(C_f^2-c^2)^2}\Big(H_2,-\frac{(C_f^2-c^2)H_1}{C_f^2},-\frac{c^2H_2}{\varrho C_f}, -\frac{C_f^2-c^2}{C_f},-\frac{c^2H_2}{\gamma C_f}\Big).\\
\end{split}
\end{equation}
\begin{rmk}
Designing proper right and left eigenvectors is crucial in our approach. As we will explain below, the above selection of eigenvectors guarantees that the equivalent form of \eqref{MHD} satisfies the genuinely nonlinear condition, as well as the boundedness condition for the corresponding coefficients in the decomposed system.
\end{rmk}
\subsection{Decomposition of waves}\label{decom}
Employing the left and right eigenvectors designed in the above subsection, we introduce
\begin{equation}\label{wi}
  w_i:=l_i\partial_x\Phi \quad \text{for} \quad i=1,\cdots,5.
  \end{equation}
By \eqref{lr} and \eqref{wi}, there holds the following formula
\begin{equation}\label{phix}
  \partial_x\Phi=\sum_{k=1}^5 w_kr_k.
\end{equation}
Before writing down the explicit equations for $w_i$, we first introduce the characteristic and bi-characteristic coordinates. We define the {\bf $i^{\text{th}}$ characteristic} originating from $z_i$ to be $\mathcal{C}_i(z_i)=\Big\{\Big(X_i(z_i,t),t\Big): 0\leq t\leq T\Big\}$ with $X_i(z_i,t)$ solving
\begin{align}\label{flow}
  \left\{\begin{array}{ll}
  \frac{\partial}{\partial t}X_i(z_i,t)=\lambda_i\big(\Phi(X_i(z_i,t),t)\big),\quad t\in[0,T],\\
  X_i(z_i,0)=z_i.
  \end{array}\right.
\end{align}
For any $(x,t)\in \mathbb{R}\times [0,T]$, there is a unique $(z_i,s_i)\in \mathbb{R}\times [0,T]$ such that $(x,t)=\big(X_i(z_i,s_i),s_i\big)$. Hence, we call $(z_i,s_i)$ {\bf the characteristic coordinate}. In addition, we can also locate the points using the intersection of two transversal characteristics $\mathcal{C}_i(y_i)$ and $\mathcal{C}_j(y_j)$ when $i\neq j$. For any $(x,t)\in \mathbb{R}\times [0,T]$, there is a unique pair of $(y_i,y_j)$ such that the characteristics $\mathcal{C}_i$ and $\mathcal{C}_j$ intersect at $(x,t)$. We define $(y_i,y_j)$ to be {\bf\textit{the bi-characteristic coordinate}}. In particular, these coordinates satisfy
\begin{equation*}
  (x,t)=\big(X_i(y_i,t'(y_i,y_j)),t'(y_i,y_j)\big)=\big(X_j(y_j,t'(y_i,y_j)),t'(y_i,y_j)\big).
\end{equation*}
A direct calculation yields the following rules of coordinate transformations:
\begin{equation}
  \partial_{z_i}=\rho_i\partial_x,\quad \partial_{s_i}=\lambda_i\partial_x+\partial_t,
\quad
  \partial_{y_i}t'=\frac{\rho_i}{\lambda_j-\lambda_i},\quad\partial_{y_j}t'=\frac{\rho_j}{\lambda_i-\lambda_j},
\end{equation}
\begin{equation}\label{biytoz}
  \partial_{y_i}=\frac{\rho_i}{\lambda_j-\lambda_i}\partial_{s_j}=\partial_{z_i}+\frac{\rho_i}{\lambda_j-\lambda_i}\partial_{s_i}
\end{equation}
and
\begin{equation}
  dx=\rho_idz_i+\lambda_ids_i,\quad dt=ds_i,\quad   dz_i=dy_i,\quad dz_j=dy_j,
\end{equation}
\begin{equation}
  dx=\frac{\rho_i\lambda_j}{\lambda_j-\lambda_i}dy_i+\frac{\rho_j\lambda_i}{\lambda_i-\lambda_j}dy_j,\quad dt=\frac{\rho_i}{\lambda_j-\lambda_i}dy_i+\frac{\rho_j}{\lambda_i-\lambda_j}dy_j,
\end{equation}
where we employ a geometric quantity $\rho_i$ (the inverse density of the $i^{\text{th}}$ characteristics) to characterize the shock formation,
\begin{equation*}
\rho_i:=\partial_{z_i}X_i.
\end{equation*}
This geometric quantity describes the compression among the characteristics of the $i^{\text{th}}$ family. And it follows from \eqref{flow} that
\begin{equation}\label{319}
  \rho_i(z_i,0)=1.
\end{equation}
We further define
\begin{equation} \label{defvi}
v_i:=\rho_iw_i.
\end{equation}
Then, via the wave decomposition as we did in \cite{an2}, we get the following system for $(\rho_i,w_i,v_i)$:
\begin{align}
  \partial_{s_i}\rho_i=&c_{ii}^iv_i+\Big(\sum_{m\neq i}c_{im}^iw_m\Big)\rho_i,\label{eqrho}\\
  \partial_{s_i}w_i=&-c_{ii}^iw_i^2+\Big(\sum_{m\neq i}(-c_{im}^i+\gamma_{im}^i)w_m\Big)w_i+\sum_{m\neq i,k\neq i\atop m\neq k}\gamma_{km}^iw_kw_m,\label{eqw}\\
  \partial_{s_i}v_i=&\Big(\sum_{m\neq i}\gamma_{im}^iw_m\Big)v_i+\sum_{m\neq i,k\neq i\atop m\neq k}\gamma_{km}^iw_kw_m\rho_i,\label{eqv}
\end{align}
where $\partial_{s_i}=\lambda_i\partial_x+\partial_t$ and
 \begin{align}
&c_{im}^i=\nabla_\Phi\lambda_i\cdot r_m,\label{coec}\\
  &\gamma_{im}^i=-(\lambda_i-\lambda_m)l_i \cdot(\nabla_\Phi r_i \cdot r_m-\nabla_\Phi r_m \cdot r_i),\quad \text{when} \  m\neq i,\label{coeg1}\\
  &\gamma_{km}^i=-(\lambda_k-\lambda_m)l_i \cdot (\nabla_\Phi r_k \cdot r_m), \qquad\qquad\qquad \text{when} \  k\neq i,\  m\neq i.\label{coeg2}
\end{align}
Noting that in \eqref{eqw}, if $c_{ii}^i\neq 0$ always holds for certain $i$, then, the $i^{\text{th}}$ characteristics are {\bf genuinely nonlinear} in the sense of P. D. Lax and $w_i$ obeys a Ricatti-type ODE. Notice that along the first characteristics, in \eqref{eqrho} and \eqref{eqw}, we have $c_{11}^1(\Phi)$ being away from zero. This is owing to
 \begin{equation*}
 \begin{split}
 &c^1_{11}(0)\Big|_{H_1=0}=\nabla_\Phi\lambda_1(0)r_1(0)\\
 =&(1,0,\frac{\sqrt{A\gamma}(\gamma-1)}{2},0,\frac{\sqrt{A\gamma}}{2})\cdot(1,0,\frac{1}{\sqrt{A\gamma}},0,0)^T\\
 =&\frac{\gamma+1}{2}>0.
 \end{split}\end{equation*}
Regarding $H_1$ as a parameter, when $H_1$ and $|\Phi|$ are small enough, we have that $c_{11}^1(\Phi)\approx\frac{\gamma+1}{2}$. This implies that the genuine nonlinearity, i.e., $c_{11}^1(\Phi)>0$.

\subsection{Boundedness of the coefficients}\label{sbd}
Consider the coefficients $c^i_{im},\gamma_{im}^i,\gamma^i_{km}$ given in \eqref{coec}-\eqref{coeg2}. Note that $C_f^2-c^2=0$ when $H_2=0$. Furthermore, it also holds $\partial_{H_2}(C_f^2-c^2)=O(H_2)$ and $\partial^2_{H_2H_2}(C_f^2-c^2)=O(1)$.  Hence, the leading order of $C_f^2-c^2$ is $O(H_2^2)$. One can then verify that $l_i$, $\nabla_\Phi r_i$ and $r_i$ are all regular provided that $|\Phi|$ is small enough. Hence, the coefficients $|c^i_{ii},\gamma^i_{im},\gamma^i_{km}|$ are all uniformly bounded from above under the assumption $\Phi\in B_{2\delta}^5(0)$.

\subsection{Characteristic strips}
At the end of this section, we introduce the characteristic strips in which our estimates would be carried out. For initial data supported in $I_0=[a,b]$, we define the {\bf$i^{\text{th}}$ characteristic strip} $\mathcal{R}_i$ by
\begin{equation*} 
\mathcal{R}_i:=\cup_{z_i\in I_0}\mathcal{C}_i(z_i).
\end{equation*}
Via deriving estimates for the decomposed system \eqref{eqrho}-\eqref{eqv}, we can prove the formation of shock, which is characterized by the vanishing of $\rho_1$ in $\mathcal{R}_1$. See Section \ref{aprior} for the details.

Corresponding to five different characteristic speeds $\lambda_i$, there are five characteristic strips starting from $I_0=[a,b]$. Now we show that these characteristic strips $\mathcal{R}_1,\cdots,\mathcal{R}_5$ totally separate with each other after a time $t_0$. We take the supremum and infimum of the eigenvalues
\begin{align*}
  \bar{\lambda}_i:=\sup_{\Phi\in B_{2\delta}^5 (0)}\lambda_i(\Phi),\quad \underline{\lambda}_i:=\inf_{\Phi\in B_{2\delta}^5(0)}\lambda_i(\Phi),\quad \text{for}\quad i=1,\cdots,5
\end{align*}
and define
\begin{equation*}
  \sigma:=\min_{\alpha<\beta\atop \alpha,\beta\in\{1,\cdots,5\}}(\underline{\lambda}_\alpha-\bar{\lambda}_\beta).
\end{equation*}
Provided $|\Phi|$ being sufficiently small, it holds that $$\sigma\approx \min_{\alpha<\beta\atop \alpha,\beta\in\{1,\cdots,5\}}(\underline{\lambda}_\alpha(0)-\bar{\lambda}_\beta(0))=O(c^2)>0.$$ Hence, $\sigma$ has a uniform positive lower bound. For $\alpha\in\{1,\cdots,5\}$ and $z\in I_0$, by \eqref{flow}, we obtain that
\begin{equation*}
  z+\underline{\lambda}_\alpha t\leq X_\alpha(z,t)\leq z+\bar{\lambda}_\alpha t.
\end{equation*}
Then, for all $\alpha<\beta$, with $\alpha,\beta\in\{1,\cdots,5\}$, we have
\begin{equation*}
   X_\alpha(a,t)-X_\beta(b,t) \geq (a+\underline{\lambda}_\alpha t)-(b+\bar{\lambda}_\beta t)=-(b-a)+(\underline{\lambda}_\alpha-\bar{\lambda}_\beta)t\geq-(b-a)+\sigma t.
\end{equation*}
Note that the above inequality is strictly positive if
\begin{equation}\label{t0}
  t>t_0:=\frac{b-a}\sigma>0.
\end{equation}
Hence the above $t_0$ in \eqref{t0} is the separating time, after which the five characteristic strips are well separated.

\section{Constructions of initial data}\label{data}
In this section, we construct the $H^\frac{7}{4}$ initial data for the 2D ideal compressible MHD system \eqref{MHD}, with which we prove the desired ill-posedness. Our construction of initial data is inspired by Lindblad \cite{Lind96,Lind98}, Ohlmann \cite{Ohl} and An-Chen-Yin \cite{an2}. In \cite{Lind96,Lind98}, Lindblad employed the function $|\ln (x)|^\alpha$ to construct initial data with low regularity. The corresponding Cauchy problems of the 3D quasilinear wave equations are proved to be ill-posed in suitable Sobolev spaces. In \cite{Ohl}, Ohlmann generalized Lindblad's result to 2D quasilinear wave equations. He introduced a cut-off function $\phi(\frac{|\ln(x)|^\delta x_2}{\sqrt{x}})$ such that the initial data belong to the logarithmic Sobolev space $H^\frac{7}{4}(\ln H)^{-\beta}$, which is slightly more singular than the fractional Sobolev space $H^\frac{7}{4}$. In this paper, we generalize Lindblad's and Ohlmann's approach and also extend our approach in \cite{an2}. Instead of working in the Fourier space as in \cite{Ohl}, we give a new proof to estimate the initial data. Here, we first estimate the $H^1$ and $H^2$ norms in the physical space. Then, we employ the interpolation theorem to derive the $H^\frac74$ norm estimate.

We first construct the $H^\frac{3}{4}$ initial data for the decomposed system \eqref{eqrho}-\eqref{eqv}. Assume $\alpha, \delta, \theta$ to be positive constants whose values will be determined later in this section. As mentioned before, we use notation $x$ instead of $x_1$. 
Let 
\begin{equation}\label{dataw0}
\tilde{w}^{(\eta)}_1(x,x_2)=w_1^{(\eta)}(x,x_2,0)=-\theta |\ln (x)|^\alpha \mathcal{X}(\frac{x}{\eta})\psi\Big(\frac{|\ln(x)|^\delta x_2}{\sqrt{x}}\Big),
\end{equation}
where $0<\delta<1$ and
$$\mathcal{X}(x)=\left\{\begin{split}
&1,\quad x\in[\frac65,\frac95],\\
&0,\quad x\in(-\infty, 1]\cup [2,+\infty),
\end{split}\right.
\qquad \psi(x)=\left\{\begin{split}
&1,\quad |x|\leq \frac14,\\
&0,\quad |x|\geq\frac12.
\end{split}\right.
$$ 
We further assign
\begin{equation} \label{data2}
\tilde{w}^{(\eta)}_k(x,x_2)=w_k^{(\eta)}(x,x_2,0)=-\theta^2 \mathcal{X}(\frac{x}{\eta})\psi\Big(\frac{|\ln(x)|^\delta x_2}{\sqrt{x}}\Big)\quad\text{for}\quad k=2,\cdots,5.
\end{equation}
With the above initial data, we prove the following lemma.

\begin{lem} \label{data lem}
Let $\tilde{w}^{(\eta)}_1$ be defined as in \eqref{dataw0}. Then, for $s\leq \frac34$, there holds $$\|\tilde{w}^{(\eta)}_1\|_{\dot{H}^{s}(\mathbb{R}^2)}\lesssim \theta \eta^{\frac34-s} |\ln\eta|^{\alpha-\frac{\delta}{2}}.$$
\end{lem}
\begin{proof}
Employing Gagliardo-Nirenberg inequality, for $0<s<1$, the $\dot{H}^{s}(\mathbb{R}^2)$ norm of $\tilde{w}_1$ is controlled by
\begin{equation}\label{snorm}
\|\tilde{w}^{(\eta)}_1\|_{\dot{H}^s(\mathbb{R}^2)}\lesssim \|\tilde{w}^{(\eta)}_1\|_{L^2(\mathbb{R}^2)}^{1-s}\|\tilde{w}^{(\eta)}_1\|_{H^1(\mathbb{R}^2)}^s.
\end{equation}
We first obtain the $L^2$ estimate of $\tilde{w}^{(\eta)}_1$ as follows:
\begin{equation}\label{L2}
\begin{split}
\|\tilde{w}^{(\eta)}_1\|_{L^2(\mathbb{R}^2)}^2\lesssim& \theta^2\int_{x\in[\eta,2\eta]}\int_{x_2\leq\sqrt{x}|\ln x|^{-\delta}}|\ln x|^{2\alpha}dx_2dx\\
\lesssim&\theta^2\int_{x\in[\eta,2\eta]}\sqrt{x}|\ln x|^{2\alpha-\delta}dx\\
\lesssim&\theta^2\eta^{\frac32}|\ln  \eta|^{2\alpha-\delta}.
\end{split}
\end{equation}
Then, by using the construction of  $\tilde{w}^{(\eta)}_1$ in \eqref{dataw0}, we compute the first derivatives:
\begin{equation} \label{express d1 w1}
\begin{split}
\partial_x \tilde{w}^{(\eta)}_1=&-\theta\alpha\frac{ |\ln x|^{\alpha-1}}{x}\mathcal{X}(\frac{x}{\eta})\psi\Big(\frac{|\ln(x)|^\delta x_2}{\sqrt{x}}\Big)-\theta \frac{|\ln x|^{\alpha}}{\eta}\mathcal{X}'(\frac{x}{\eta})\psi\Big(\frac{|\ln(x)|^\delta x_2}{\sqrt{x}}\Big)\\
&-\theta|\ln x|^{\alpha}\mathcal{X}(\frac{x}{\eta})\psi'\Big(\frac{|\ln(x)|^\delta x_2}{\sqrt{x}}\Big)\Big(\delta|\ln x|^{\delta-1}x^{-\frac32}x_2-\frac12|\ln x|^\delta x^{-\frac32}x_2\Big),
\end{split}
\end{equation}
and 
\begin{equation} \label{d2 w1}
\partial_{x_2}\tilde{w}^{(\eta)}_1=-\theta \mathcal{X}(\frac{x}{\eta})\frac{|\ln x|^{\alpha+\delta}}{\sqrt{x}}\psi'\Big(\frac{|\ln(x)|^\delta x_2}{\sqrt{x}}\Big).
\end{equation}
We are now ready to calculate the $\dot{H}^1$ norm of  $\tilde{w}^{(\eta)}_1$, i.e. 
\begin{equation}
\|\tilde{w}_1^{(\eta)}\|_{\dot{H}^1(\mathbb{R}^2)}^2=\|\partial_x \tilde{w}_1^{(\eta)}\|_{L^2(\mathbb{R}^2)}^2+\|\partial_{x_2}\tilde{w}_1^{(\eta)}\|_{L^2(\mathbb{R}^2)}^2.
\end{equation}
By using \eqref{express d1 w1}-\eqref{d2 w1}, we obtain
\begin{equation} \label{d1 norm est}
	\begin{split}
		\|\partial_{x}\tilde{w}_1^{(\eta)}\|_{L^2(\mathbb{R}^2)}^2\lesssim&\theta^2\int_{x\in[\eta,2\eta]}\int_{x_2\leq\sqrt{x}|\ln x|^{-\delta}}|\ln x|^{2\alpha-2}x^{-2}dx_2dx\\
		&+\theta^2\int_{x\in[\eta,2\eta]}\int_{x_2\leq\sqrt{x}|\ln x|^{-\delta}}|\ln x|^{2\alpha}\eta^{-2}dx_2dx\\
        &+\theta^2\int_{x\in[\eta,2\eta]}\int_{x_2\leq\sqrt{x}|\ln x|^{-\delta}}|\ln x|^{2\alpha}\big(|\ln x|^{\delta-1}x^{-\frac32}x_2\big)^2dx_2dx\\
        &+\theta^2\int_{x\in[\eta,2\eta]}\int_{x_2\leq\sqrt{x}|\ln x|^{-\delta}}|\ln x|^{2\alpha}\big(|\ln x|^{\delta}x^{-\frac32}x_2\big)^2dx_2dx\\
		\lesssim&\theta^2\eta^{-\frac12}|\ln \eta|^{2\alpha-\delta},
	\end{split}
\end{equation}
and
\begin{equation} \label{x2 norm}
\begin{split}
\|\partial_{x_2}\tilde{w}_1^{(\eta)}\|_{L^2(\mathbb{R}^2)}^2\lesssim&\theta^2\int_{x\in[\eta,2\eta]}\int_{x_2\leq\sqrt{x}|\ln x|^{-\delta}}|\ln x|^{2\alpha+2\delta}x^{-1}dx_2dx\\
\lesssim&\theta^2\int_{x\in[\eta,2\eta]}x^{-\frac12}|\ln x|^{2\alpha+\delta}dx\\
\lesssim&\theta^2\eta^{\frac12}|\ln \eta|^{2\alpha+\delta}.
\end{split}
\end{equation}
Therefore, the $\dot{H}^1$ norm of  $\tilde{w}_1^{(\eta)}$ satisfies 
\begin{equation}\label{H1}
\|\tilde{w}_1^{(\eta)}\|_{\dot{H}^1(\mathbb{R}^2)}\lesssim \theta\eta^{-\frac14}|\ln \eta|^{\alpha-\frac{\delta}{2}}.
\end{equation}
Inserting \eqref{L2} and \eqref{H1} into the Gagliardo-Nirenberg inequality \eqref{snorm}, we have
\begin{equation}\label{Hs}
\|\tilde{w}_1^{(\eta)}\|_{\dot{H}^s(\mathbb{R}^2)}\lesssim \theta \eta^{\frac34-s}|\ln \eta|^{\alpha-\frac{\delta}{2}}.
\end{equation}
 This completes the proof of this lemma.
\end{proof}

\begin{rmk}
    By choosing a sequence $\{\eta_m\}$ with sufficiently rapid decay speed, for instance, $\eta_m \sim e^{-\frac{1}{m^M}}$ with $M\geq 2$ being sufficiently large, one can see that the initial data have disjoint supports, and their sum $\tilde{w}_1=\sum\limits_m\tilde{w}_1^{(\eta_m)}$ converges in the corresponding Sobolev space. This $\tilde{w}_1$ then provides a counterexample to local existence of low regularity solutions. Noting that $T_{\eta_m}^* \to 0$, there exists no solution with initial data $\tilde{w}_1$ for any $T>0$.
\end{rmk}

\begin{rmk}
	In 3D, we can construct a  similar profile
	\begin{equation}\label{dataw0 3D}
		w_1^{(\eta)}(x,x_2,x_3,0)=-\theta |\ln (x)|^\alpha \mathcal{X}(\frac{x}{\eta})\psi\Big(\frac{|\ln(x)|^\delta x_2}{\sqrt{x}}\Big)\psi\Big(\frac{|\ln(x)|^\delta x_3}{\sqrt{x}}\Big).	\end{equation}
We verify in \cite{an2} that this profile leads to $H^2$ ill-posedness in 3D.
\end{rmk}
\begin{rmk}
Note that we can consider a more general type of initial data
\begin{equation*}
\tilde{w}^{(\eta)}_1(x,x_2)=-\theta |\ln (x)|^\alpha \mathcal{X}(\frac{x}{\eta})\psi\Big(\frac{|\ln(x)|^\delta x_2^{k}}{x^m}\Big).
\end{equation*}
The ansatz is related to the threshold regularity $H^\frac34$. In particular, in the same manner, for $\tilde{w}^{(\eta)}_1$ we have
\begin{equation*}
\|\tilde{w}_1^{(\eta)}\|^2_{\dot{H}^{s}(\mathbb{R}^2)}
\lesssim\theta^2\underline{\eta^{1+\frac{m}{k}-2s}}|\ln\eta|^{2\alpha-\frac{\delta}{k}}.
\end{equation*}
Here, $s=\frac12+\frac{m}{2k}$ is the threshold regularity. In \eqref{dataw0}, we take $\frac{m}{k}=\frac12$. The $x_2$-scale of this new datum is $\frac{\eta^\frac{m}{k}}{|\ln \eta|^\frac{\delta}{k}}$. As discussed in Section \ref{ill}, to ensure that the planar symmetric region covers the initial domain of dependence, we must require that
\begin{equation*}
\frac{\sqrt{\eta}}{\theta^{\frac12}|\ln \eta|^\frac{\alpha}{2}}\lesssim\frac{\eta^\frac{m}{k}}{|\ln \eta|^\frac{\delta}{k}},
\end{equation*}
which necessarily indicates that $\frac{m}{k}\leq\frac12$ for $\theta\sim |\ln\eta|^{2\delta-\alpha}$. Here $\frac{m}{k}=\frac12$ optimizes the threshold regularity, which then yields the criticality of $s=\frac34$. See Figure \ref{threshold fig} below as illustration.
\begin{figure}[H]
\centering
\includegraphics[width=0.7\linewidth]{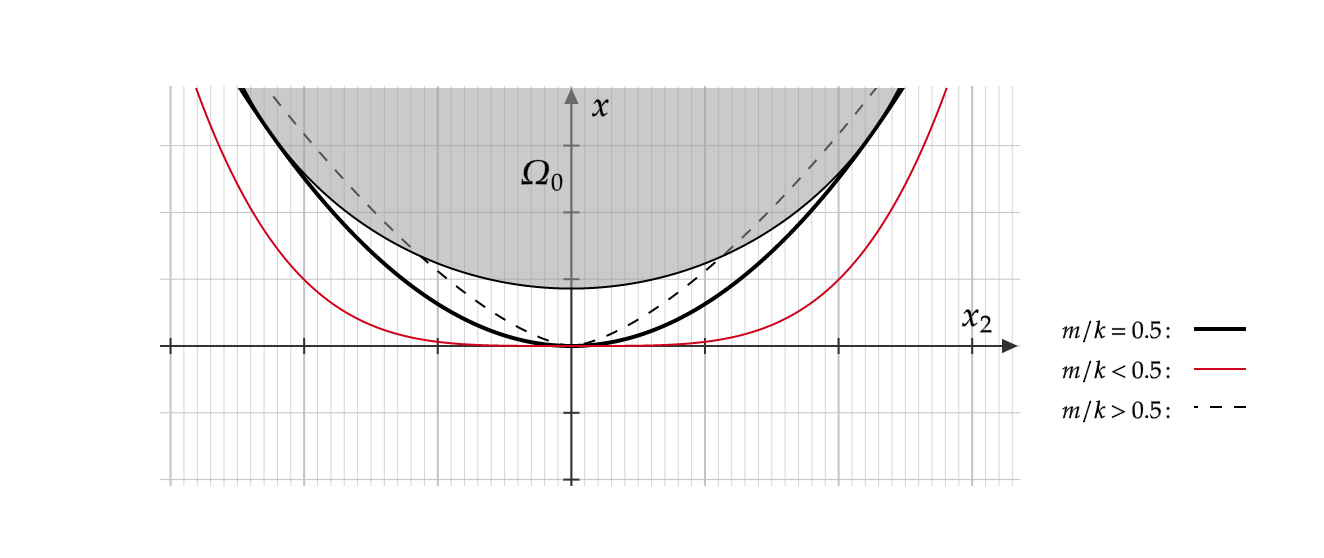}
\captionof{figure}{}
\label{threshold fig}
\end{figure}
\end{rmk}

From \eqref{Hs}, one can observe that $s=\frac34$ is the threshold regularity. To establish ill-posedness in $H^{s+1}$, we carefully analyze the relationship of the parameters $\theta,\alpha, \delta$. As will be discussed in Section \ref{ill}, we choose  $\theta\sim|\ln\eta|^{2\delta-\alpha}\ll 1$, and impose the following conditions for the parameters 
\begin{equation}
	\alpha>2\delta.
\end{equation}
With this construction, for $s<\frac34$, we have that $\|w_1^{(\eta)}\|_{\dot{H}^{s}(\mathbb{R}^2)}\to 0$ as $\eta\to 0$. Based on this, we now go back to the original system \eqref{eq} by reversing the process of wave decomposition. By the formula of decomposition of waves \eqref{phix}, we have
\begin{equation} \label{combo2}
\partial_x\Phi(x,x_2,0)=\sum_{i=1}^5 {w}_i^{(\eta)}(x,x_2,0)r_i\big(\Phi(x,x_2,0)\big).
\end{equation}
Noting that the right eigenvectors $r_i$ chosen in Section \ref{pre} are Lipschitz continuous in $\Phi$, by the ODE argument, one can solve \eqref{combo2} for the initial data $\Phi^{(\eta)}(x,x_2,0)$ of the 2D system \eqref{eq}. Moreover, recall from Section \ref{sbd} that $r_i(\Phi)\in L^\infty( B_{2\delta}^5(0))$. Together with the fact that $w^{(\eta)}_1(x,x_2,0)\in H^{s}(\mathbb{R}^2)$, it follows that $\partial_x\Phi\in H^{s}(\mathbb{R}^2).$ Furthermore, when $j \neq 1$, we have $\|\tilde{w}_j^{(\eta)}\|_{{H}^{s}(\mathbb{R}^2)}\leq \|\tilde{w}_1^{(\eta)}\|_{{H}^{s}(\mathbb{R}^2)}$. Together with \eqref{Hs}, we know that when $s<\frac34$ this implies the vanishing of $\|\partial_x\Phi_0^{(\eta)}\|_{\dot{H}^{s}(\mathbb{R}^2)}$ as $\eta \to 0$ and whence the $H^{\frac74-\epsilon}$  ill-posedness\footnote{The vanishing of $\|\partial_{x_2}\Phi_0^{(\eta)}\|_{\dot{H}^{s}(\mathbb{R}^2)}$ can be obtained in a similar fashion as in \eqref{x2 norm}.} with arbitrary $0<\epsilon\ll 1$ for the Cauchy problems of the 2D ideal compressible MHD system.

\begin{rmk} \label{l2bump}
Here, we also need to guarantee the $L^2$-integrability of $\Phi_0$. According to \eqref{combo2}, this is equivalent to show the $L^2$-integrability of $\int_{0}^{x}\tilde{w}_1^{(\eta)}(s)ds$. This could be achieved by multiplying the data by a cut-off function independent of $\eta$, for instance, the smooth cut-off function $\psi(x)$ defined in \eqref{dataw0} which is compactly supported within $\{|x|\leq\frac12\}$\footnote{The choice of cut-off function does not affect the shock formation argument, since the (planar) shock forms only within a small interval $x\in[\eta,2\eta]$, where the cut-off function satisfies $\psi(x)\equiv 1$.}. In particular, we have that $\psi(x)\int_{0}^{x}\tilde{w}_1^{(\eta)}(s)ds \in L^2(\mathbb{R}^2)$. Noting that
\begin{equation}\label{c0func}
\left|\psi(x)\int_{0}^{x}\tilde{w}_1^{(\eta)}(s)ds\right| \leq \left|\psi(x)\int_{\eta}^{2\eta}\tilde{w}_1^{(\eta)}(s)ds\right|\lesssim \eta |\ln \eta|^\alpha,
\end{equation}
one could verify that the $L^2$-norm of $\psi(x)\int_{0}^{x}\tilde{w}_1^{(\eta)}(s)ds$ also vanishes as $\eta\to 0$.

Furthermore, we take first derivative of this function and obtain that
\begin{equation}\label{c1func}
\frac{d}{dx}\left(\psi(x)\int_{0}^{x}\tilde{w}_1^{(\eta)}(s)ds\right)= \psi'(x)\int_{0}^{x}\tilde{w}_1^{(\eta)}(s)ds+\psi(x)\tilde{w}_1^{(\eta)}(x).
\end{equation}
The second term on the RHS of \eqref{c1func} has already been estimated in this section. For the first term on the RHS of \eqref{c1func}, by a similar argument as for \eqref{c0func}, we have that
$$
\|\psi'(x)\int_{0}^{x}\tilde{w}_1^{(\eta)}(s)ds_1^{(\eta)}\|^2_{\dot{H}^{s}(\mathbb{R}^2)}
 \lesssim \eta^2\|\tilde{w}_1^{(\eta)}\|^2_{\dot{H}^{s}(\mathbb{R}^2)},
$$
which also vanishes as $\eta\to 0$. In the following sections, without loss of generality, we use the initial data defined in \eqref{dataw0} and \eqref{data2}. The additional term appearing in this remark is negligible when $\eta$ is sufficiently small.
\end{rmk}

 In the critical space $H^\frac74$, i.e., when $s=\frac34$, the situation becomes more subtle. As we will see in Section \ref{ill}, to accommodate the planar symmetric shock formation argument, in $x_2$ direction, the initial domain of dependence must be at least of width $\frac{\eta^\frac12}{\theta^\frac12|\ln \eta|^\frac{\alpha}{2}}$. On the other hand, due to our construction of initial data in \eqref{dataw0}, the data exhibit plane symmetry within a domain satisfying
$$
|x_2|\sim \frac{\sqrt{x}}{|\ln x|^\delta}.
$$
Since our data are supported in $x$-direction within the interval $[\eta,2\eta]$, the above formula implies that
$$
|x_2|\sim \frac{\sqrt{\eta}}{|\ln \eta|^\delta}.
$$
Comparing this with the size of the initial domain of dependence, we need $\theta\gtrsim \left|\ln \eta\right|^{-\alpha+2\delta}$ to ensure that the initial domain of dependence lies in the planar symmetric portion of the initial data. In particular, as mentioned before, we choose  $\theta\sim|\ln\eta|^{2\delta-\alpha}\ll 1$ with
$2\delta-\alpha<0$.
Then, the lifespan verifies the following condition:
\begin{equation}
T_\eta^*\lesssim\frac{1}{|\ln\eta|^{2\delta-\alpha+\alpha}}\sim\frac{1}{|\ln\eta|^{2\delta}} \quad \text{with}\quad \delta>0,
\end{equation}
which tends to $0$ as $\eta\to 0$. The instantaneous shock formation and $H^1$ norm inflation stated in Theorem \ref{2D} therefore also apply to this new family of initial data. In this case, the initial data satisfy
\begin{equation} \label{growing norm}
\|w_1^{(\eta)}\|_{\dot{H}^{\frac34}(\mathbb{R}^2)}\lesssim |\ln\eta|^{2\delta-\alpha+\alpha-\frac{\delta}{2}}=|\ln\eta|^{\frac{3\delta}{2}}.
\end{equation}
The exponent of $\left|\ln \eta\right|$ is positive, causing this estimate to blow up as $\eta \to 0$. However, by comparing \eqref{d1 norm est} with \eqref{x2 norm}, we observe that the regularity loss concentrates entirely along the $x_1$ direction. To address this issue, we will introduce the anisotropic Sobolev space and discuss the $H^\frac74$ ill-posedness in the anisotropic sense. Before proceeding with the anisotropic ill-posedness, we present two remarks regarding the isotropic case.
\begin{rmk} \label{failiure reason}
 The unexpected initial behaviour \eqref{growing norm} results from a technical reason, namely the planar symmetric condition in shock formation argument. Recall that our choice of parameters $\theta, \alpha, \delta$ is based on three essential conditions:
 \begin{enumerate}
     \item Instantaneous shock formation, i.e.,  $$T_\eta^*\sim(\theta|\ln \eta|^\alpha)^{-1} \to0.$$
     \item Vanishing initial data, i.e.,  $$\|\tilde{w}_1^{(\eta)}\|_{\dot{H}^{\frac34}}\lesssim \theta |\ln \eta|^{\alpha-\frac{\delta}{2}} \to 0.$$
     \item The planar symmetric region is large enough, i.e., $$\theta\gtrsim \left|\ln \eta\right|^{-\alpha+2\delta}.$$
 \end{enumerate}
 The preceding analysis shows that $(1)(2)(3)$ cannot be satisfied simultaneously. However, among these three conditions, condition $(3)$ is purely technical and stems from the symmetry requirement in the current shock formation framework. If the shock formation argument can be extended to the general situation with no symmetry, then condition $(1)$ would still yield the desired blow-up time. In that case, one could find admissible parameters such that $(1)(2)$ are both achieved, for example, $\theta=|\ln \eta|^{-\alpha+\delta/3}$. 
 
 Therefore, the obstruction encountered here is not intrinsic to the mechanism itself, but rather to the present technical framework. In principle, this issue could be resolved by establishing a more robust shock formation result with no symmetry, and this will yield the isotropic $H^\frac74$ ill-posedness.   
\end{rmk}
\begin{rmk} \label{weak ill-posedness}
In the isentropic space $H^\frac74$, we still have the following version of ill-posedness. Specifically, we set $\theta=\left|\ln \eta\right|^{-\alpha+2\delta}$ with $\delta<0$. Under this choice, conditions $(2)$ and $(3)$ in Remark \ref{failiure reason} are satisfied, at the cost of the increase in blow-up time $T_\eta^*$. This indicates the existence of a family of small initial data with their sizes shrinking to $0$, whereas each of the initial data lead to shock formation in a future time.  
\end{rmk}

Now, at the threshold regularity $s=\frac74$, we investigate the ill-posedness in the anisotropic space. First, we define the anisotropic Sobolev space\footnote{Here, we consider the two-dimensional case. For further discussion of anisotropic Sobolev spaces in a more general framework, we refer interested readers to Bahouri-Chemin-Danchin \cite{ba-ch-da book}, Besov \cite{besov} and Slobodecki\u{\i} \cite{aniso paper}.} $H^{s_1,s_2}(\mathbb{R}^2)$.
\begin{definition} \label{aniso space}
Let $s_1$ and $s_2$ be real numbers. The anisotropic Sobolev space $H^{s_1,s_2}(\mathbb{R}^2)$ consists of all tempered distributions $f$ such that 
\begin{equation*}
\|f\|_{H^{s_1,s_2}}=\Big(\int_{\mathbb{R}^2}(1+|\xi_1|^2)^{s_1}(1+|\xi_2|^2)^{s_2}|\hat{f}(\xi)|^2d\xi\Big)^{1/2}<+\infty.
\end{equation*}
Similarly, the homogeneous anisotropic Sobolev space $\dot{H}^{s_1,s_2}(\mathbb{R}^2)$ is defined as the set of tempered distributions $f$ satisfying
\begin{equation*}
\|f\|_{\dot{H}^{s_1,s_2}}=\Big(\int_{\mathbb{R}^2}|\xi_1|^{2s_1}|\xi_2|^{2s_2}|\hat{f}(\xi)|^2d\xi\Big)^{1/2}<+\infty.
\end{equation*}
\end{definition}
In the following, for notational simplicity, we denote $\partial_1^{s_1}\partial_2^{s_2} f$ as the inverse Fourier transform of $\xi_1^{s_1}\xi_2^{s_2}\hat{f}$. For each fixed $i=1,2$,  the interpolation inequality yields
\begin{equation}\label{3.25}
    \|\partial_{x_i}^{s_i}f\|_{L^2}\lesssim \|f\|_{L^2}^{1-s_i}\|\partial_{x_i}f\|_{L^2}^{s_i}.
\end{equation}
For the family of initial data 
\begin{equation}
		w_1^{(\eta)}(x_1,x_2)=-\theta |\ln (x_1)|^\alpha \mathcal{X}(\frac{x_1}{\eta})\psi\Big(\frac{|\ln(x_1)|^\delta x_2}{\sqrt{x_1}}\Big),	\end{equation}
we now prove that these data vanish in $H^{\frac34-\epsilon,\epsilon}(\mathbb{R}^2)$ as $\eta \to 0$ for any $0<\epsilon\leq\frac34$. Employing the interpolation inequality twice, we derive
\begin{equation}\label{3.27}
\begin{aligned}
    \|\partial_{x_1}^{\frac34-\epsilon}\partial_{x_2}^\epsilon w_1^{(\eta)}\|_{L^2}&\lesssim\|\partial_{x_2}^\epsilon w_1^{(\eta)}\|_{L^2}^{\frac14+\epsilon}\|\partial_{x_1}\partial_{x_2}^\epsilon w_1^{(\eta)}\|_{L^2}^{\frac34-\epsilon}\\
    &\lesssim\Big(\|w_1^{(\eta)}\|_{L^2}^{1-\epsilon}\|\partial_{x_2}w_1^{(\eta)}\|_{L^2}^{\epsilon}\Big)^{\frac14+\epsilon}\Big(\|\partial_{x_1}w_1^{(\eta)}\|_{L^2}^{1-\epsilon}\|\partial_{x_2}\partial_{x_1}w_1^{(\eta)}\|_{L^2}^{\epsilon}\Big)^{\frac34-\epsilon}.
    \end{aligned}
\end{equation}
In order to control the RHS of \eqref{3.27}, we compute the higher order derivative:
\begin{equation} 
\begin{split}
\partial_{x_2}\partial_{x_1} w^{(\eta)}_1=&-\theta\alpha\frac{ |\ln x_1|^{\alpha+\delta-1}}{x_1^{3/2}}\mathcal{X}(\frac{x_1}{\eta})\psi'\Big(\frac{|\ln x_1|^\delta x_2}{\sqrt{x_1}}\Big)-\theta \frac{|\ln x_1|^{\alpha+\delta}}{\eta\sqrt{x_1}}\mathcal{X}'(\frac{x_1}{\eta})\psi'\Big(\frac{|\ln x_1|^\delta x_2}{\sqrt{x_1}}\Big)\\
&-\theta|\ln x_1|^{\alpha}\mathcal{X}(\frac{x_1}{\eta})\psi''\Big(\frac{|\ln x_1|^\delta x_2}{\sqrt{x_1}}\Big)\Big(\delta|\ln x_1|^{2\delta-1}x_1^{-2}x_2-\frac12|\ln x_1|^{2\delta} x_1^{-2}x_2\Big)\\
&-\theta|\ln x_1|^{\alpha}\mathcal{X}(\frac{x_1}{\eta})\psi'\Big(\frac{|\ln x_1|^\delta x_2}{\sqrt{x_1}}\Big)\Big(\delta|\ln x_1|^{\delta-1}x_1^{-\frac32}-\frac12|\ln x_1|^{\delta} x_1^{-\frac32}\Big).
\end{split}
\end{equation}
We bound its $L^2$-norm as follows:
\begin{equation}
    \|\partial_{x_2}\partial_{x_1}w_1^{(\eta)}\|_{L^2}\lesssim\theta\eta^{-\frac34}|\ln\eta|^{\alpha+\frac\delta2}.
\end{equation}
For other lower order terms in \eqref{3.27}, recall from the proof of Lemma \ref{data lem} that
\begin{align}   &\|w_1^{(\eta)}\|_{L^2}\lesssim\theta\eta^{\frac34}|\ln\eta|^{\alpha-\frac\delta2},\\
&\|\partial_{x_1}w_1^{(\eta)}\|_{L^2}\lesssim\theta\eta^{-\frac14}|\ln\eta|^{\alpha-\frac\delta2},\\
&\|\partial_{x_2}w_1^{(\eta)}\|_{L^2}\lesssim\theta\eta^{\frac14}|\ln\eta|^{\alpha+\frac\delta2}.
\end{align}
Plugging in all the above estimates in \eqref{3.27}, by the definition of the anisotropic Sobolev space, we then obtain the following estimate
\begin{equation}
\begin{aligned}   \|w_1^{(\eta)}\|_{H^{\frac34-\epsilon,\epsilon}}\lesssim& \|w_1^{(\eta)}\|_{L^2}+\|\partial_{x_1}^{\frac34-\epsilon}\partial_{x_2}^\epsilon w_1^{(\eta)}\|_{L^2}\\
   \lesssim&\theta\eta^{\frac34}|\ln\eta|^{\alpha-\frac\delta2}+\theta\eta^{\frac12\epsilon}|\ln\eta|^{\alpha-\frac\delta2+\epsilon\delta},
\end{aligned}
\end{equation}
which goes to $0$ as $\eta\to 0$. Consequently, this construction yields the anisotropic $H^\frac74$ ill-posedness.

In addition, we need to solve $H_1$ to preserve the constraint $\nabla\cdot H=0$, especially in the non-symmetric region. We define the initial data of $H_2$ as follows\footnote{In Theorem \ref{2D}, we replace the initial data of $\Phi_4^{(\eta)}$ with this $H_2^{(\eta)}(x,x_2,0)$.},
    \begin{equation*}
        H_2^{(\eta)}(x,x_2,0)=
        \begin{cases}
        \Phi_4^{(\eta)}(x,x_2,0), \quad x\geq0,\\
        -\Phi_4^{(\eta)}(-x,x_2,0), \quad x<0.
        \end{cases}
    \end{equation*}
Note that $\Phi_4^{(\eta)}(x,x_2,0)$ is indeed supported within $\{\eta\leq x\leq 2\eta\}$. Therefore, by finite speed of propagation, the $\{x<0\}$-part of $H_2^{(\eta)}(x,x_2,0)$ will not interfere with the previous shock formation and ill-posedness argument carried out in $\{x\geq 0\}$. Assume that the constant parameter $H_1$ takes the value $\kappa$ in the earlier planar symmetric shock formation argument. We first assign $H_1(x,x_2)\equiv \kappa$ for $x\geq 2\eta$. Then, for each fixed $x_2$, we solve the transport equation for $H_1^{(\eta)}$ backward in $x$:
    $$\partial_{x_1} H_1^{(\eta)}(x,x_2,0)=-\partial_{x_2}H_2^{(\eta)}(x,x_2,0),$$
with the initial condition $$H_1^{(\eta)}(2\eta,x_2,0)=\kappa.$$
Hence, the initial datum of $H_{1}^{(\eta)}-\kappa$ is compactly supported within $[-2\eta,2\eta]$. Moreover, by Remark \ref{l2bump}, $\|H_{1}^{(\eta)}(\vec{x},0)-\kappa\|_{L^\infty(\mathbb{R}^2)}$ is bounded by $\eta |\ln \eta|^\alpha$, which implies $$\|H_{1}^{(\eta)}(\vec{x},0)-\kappa\|_{L^2(\mathbb{R}^2)}\to 0,\quad \text{as}\quad \eta \to 0.$$ 
In addition, by Lemma \ref{data lem}, we can also establish the vanishing of $\|\partial H_{1}^{(\eta)}\|_{H^s(\mathbb{R}^2)}$ at $t=0$. Therefore, the initial data of $H_1^{(\eta)}-\kappa$ here constitute the profile we need.

\section{Shock formation}\label{aprior}
In this section, we prove the (first) shock forms at a finite time $T_\eta^*$ within $\mathcal{R}_1$. To demonstrate this, we carry out a priori estimates in $L^\infty$ norm.
\subsection{$L^\infty$ estimates} \label{Linfest}
Consider the decomposed diagonal system \eqref{eqrho}-\eqref{eqv} under planar symmetry. For the initial data designed in Section \ref{data}, we have
\begin{equation*}
W_0^{(\eta)}:=\max_i\sup_{z}|w_i^{(\eta)}(z,0)|=-w_1^{(\eta)}(z_0,0)>0
\end{equation*}
for some $z_0\in(\eta, 2\eta)$. To study the Cauchy problem for system \eqref{eqrho}-\eqref{eqv}, we estimate the following quantities:
\begin{align*}
  S_\eta(t):=&\max_{i}\sup_{(z'_i,s'_i)\atop z'_i\in[\eta,2\eta],\ 0\leq s'_i\leq t}\rho_i^{(\eta)}(z'_i,s'_i),\\
   J_\eta(t):=&\max_{i}\sup_{(z'_i,s'_i)\atop z'_i\in[\eta,2\eta]\ 0\leq s'_i\leq t}|v_i^{(\eta)}(z'_i,s'_i)|,\\
  V_\eta(t):=&\max_i\sup_{(x',t')\notin\mathcal{R}_i,\atop 0\leq t'\leq t}|w_i^{(\eta)}(x',t')|,\\
  \bar{U}_\eta(t):=&\sup_{(x',t')\atop 0\leq t'\leq t}|\Phi^{(\eta)}(x',t')|.
\end{align*}
Let $\Phi^{(\eta)}\in C^2(\mathbb{R}\times[0,T],B_{2\delta}^5(0))$ be a solution to \eqref{MHD} for $t\in[0,T]$ with $T>0$. Our aim is to show that, for  $t\in[0,T_{\eta}^*)$ it holds
\begin{align}\label{bound}
  S_\eta(t)=O(1),\ J_\eta(t)=O(W_0^{(\eta)}),\
 V_\eta(t)=O\Big(\eta [W_0^{(\eta)}]^2\Big),\ \bar{U}_\eta(t)&=O(\eta W_0^{(\eta)}),
\end{align}
and
\begin{equation}
T_{\eta}^*\leq O\Big(\frac{1}{W_0^{(\eta)}}\Big).
\end{equation}
Here, $O(\cdot)$ again, denotes the Landau notation on the whole time interval $[0,T_\eta^*)$. For simplicity, we suppress the subscript $\eta$ in the above quantities, with $\eta$ viewed as a fixed small parameter in this section.

The main strategy is to derive the following a priori estimates for $t<T_\eta^*$
\begin{align}
  S(t)&=O(1+tJ+tVS),\label{bds} \\
  J(t)&=O(W_0^{(\eta)}+t VJ+tV^2 S),\label{bdj}\\
 V(t)&=O\Big(\eta [W_0^{(\eta)}]^2+tV^2+\eta VJ\Big),\label{bdv}\\
 \bar{U}(t)&=O(\eta J+\eta V+\eta tV).\label{bdu}
\end{align}
Once these a priori estimates are obtained, we then set up a bootstrap argument to deduce \eqref{bound}. In particular, supposing that
\begin{equation} \label{BA}
 tV\leq \theta^{\frac12}, \qquad J\leq \theta^{\frac12},
\end{equation}
via \eqref{bds}-\eqref{bdv}, we have that
\begin{equation*}
\begin{split}
S(t)=O(1+tJ+\theta^{\frac12}S)&\qquad\Rightarrow\qquad S(t)=O(1+tJ),\\
 V(t)=O\Big(\eta [W_0^{(\eta)}]^2+\theta^{\frac12}V+\eta  \theta^{\frac12}V\Big)&\qquad\Rightarrow\qquad V(t)=O\Big(\eta [W_0^{(\eta)}]^2\Big),
\end{split}
\end{equation*}
and
\begin{equation*}
\begin{split}
J(t)=O(W_0^{(\eta)}+\theta^{\frac12}J+\theta^{\frac12}VS)\qquad&\Rightarrow\qquad J(t)=O\big(W_0^{(\eta)}+\theta^{\frac12}V(1+tJ)\big)\\
&\Rightarrow\qquad J(t)=O\big(W_0^{(\eta)}+\theta^{\frac12}V\big)=O\big(W_0^{(\eta)}\big)
\end{split}
\end{equation*}
Then for $t< O(\frac{1}{W_0^{(\eta)}})$, we get $S(t)=O(1)$. These estimates improve the bootstrap assumptions in \eqref{BA}. Moreover, with \eqref{bdu} we also obtain $\bar{U}(t)=O(\eta W_0^{(\eta)})$.
\\

Now we derive the a priori estimates stated in \eqref{bds}-\eqref{bdu}. Recall that for any fixed $x_2$, the initial data are supported in $[\eta,2\eta]$. And from \eqref{t0}, we also know that the characteristic strips are well separated after time $$t_0^{(\eta)}:=\frac{\eta}\sigma.$$
\begin{figure}[H]
\centering
\begin{tikzpicture}[fill opacity=0.5, draw opacity=1, text opacity=1]
\node [below]at(3.5,0){$2\eta$};
\node [below]at(2.2,0){$\eta$};

\filldraw[white, fill=gray!40] (3.5,0)..controls (2,1) and (1,2)..(-0.8,3.5)--(0.4,3.5)..controls (1,2.8) and (1.9,2)..(2.5,0);
\filldraw[white, fill=gray!40](3.5,0)..controls (3.2,1) and (2,2.6)..(1.3,3.5)--(3.5,3.5)..controls (2.2,1.5) and (2.6,1)..(2.5,0);
\filldraw[white, fill=gray!40](3.5,0)..controls (3.4,1) and (3,1.5)..(4.5,3.5)--(5.5,3.5)..controls (3.2,1.5) and (2.8,1)..(2.5,0);
\filldraw[white, fill=gray!40](3.5,0)..controls (3.8,1) and (4,1.5)..(6.6,3.5)--(8.5,3.5)..controls (3.9,1.5) and (3.5,1)..(2.5,0);

\filldraw[white, fill=gray!80] (2.5,0)..controls (3.5,1) and (3.9,1.5)..(8.5,3.5)--(10,3.5)..controls (5.5,1.5) and (4,0.5)..(3.5,0)--(2.5,0);
\filldraw[white,fill=gray!80](2.5,0)..controls (2.8,1) and (3.2,1.5)..(5.5,3.5)--(6.6,3.5)..controls (4,1.5) and (3.8,1)..(3.5,0);
\filldraw[white,fill=gray!80](2.5,0)..controls (2.6,1) and (2.2,1.5)..(3.5,3.5)--(4.5,3.5)..controls (3,1.5) and (3.4,1)..(3.5,0);
\filldraw[white,fill=gray!80](2.5,0)..controls (1.9,2) and (1,2.8)..(0.4,3.5)--(1.3,3.5)..controls (2,2.6) and (3.2,1)..(3.5,0);
\filldraw[white,fill=gray!80](2.5,0)..controls (1,1) and (0.5,1.8)..(-1.8,3.5)--(-0.8,3.5)..controls (1,2) and (2,1)..(3.5,0);
\draw[->](-1.8,0)--(10,0)node[left,below]{$t=0$};
\draw[dashed](-1.8,1.7)--(10,1.7)node[right,below]{$t=t_0^{(\eta)}$};

\draw (3.5,0)..controls (4,0.5) and (5.5,1.5)..(10,3.5);

\draw (2.5,0)..controls (3.5,1) and (3.9,1.5)..(8.5,3.5);
\node [below] at(7.6,3){$\mathcal{R}_1$};

\draw [color=black](3.5,0)..controls (3.8,1) and (4,1.5)..(6.6,3.5);

\draw [color=black](2.5,0)..controls (2.8,1) and (3.2,1.5)..(5.5,3.5);
\node [below] at(5.2,3){$\mathcal{R}_2$};

\draw [color=black](3.5,0)..controls (3.4,1) and (3,1.5)..(4.5,3.5);

\draw [color=black](2.5,0)..controls (2.6,1) and (2.2,1.5)..(3.5,3.5);
\node [below] at(3.5,3){$\mathcal{R}_3$};

\draw [color=black](3.5,0)..controls (3.2,1) and (2,2.6)..(1.3,3.5);
\node [below] at(1.6,3){$\mathcal{R}_4$};

\draw [color=black] (2.5,0)..controls (1.9,2) and (1,2.8)..(0.4,3.5);

\draw [color=black](3.5,0)..controls (2,1) and (1,2)..(-0.8,3.5);
\node [below] at(-0.2,3){$\mathcal{R}_{5}$};

\draw [color=black] (2.5,0)..controls (1,1) and (0.5,1.8)..(-1.8,3.5);
\end{tikzpicture}
\caption{\small\bf Separation of five characteristic strips.} \label{fig1}
\end{figure}
According to Section \ref{sbd}, all the coefficients in equations \eqref{eqrho}-\eqref{eqv} are proved to be bounded. Here we denote this uniform upper bound to be $\Gamma$. The a priori estimates are deduced in two regions: the non-separating region $[0,t_0^{(\eta)}]$ and the separating region $[t_0^{(\eta)},T]$ as in Figure \ref{fig1}.

{\it Non-separating region $[0,t_0^{(\eta)}]$.} During this time period, all the characteristic strips may overlap with each other.
\begin{itemize}
\item{Estimate of $W(t)$:} We first bound
\begin{equation*}
W(t)=\max_i\sup_{(x',t')\atop 0\leq t'\leq t}\big|w_i(x',t')\big|.
\end{equation*}
Using \eqref{eqw}, we have
\begin{equation*}
  \frac{\partial}{\partial s_i}|w_i|\leq \Gamma W^2.
\end{equation*}
By comparing $w_i$ with the solution $Y$ to the following ODE
\begin{equation*}
  \left\{\begin{array}{ll}
  \frac{d}{dt}Y=\Gamma Y^2,\\
  Y(0)=W_0^{(\eta)},
  \end{array}
  \right.
\end{equation*}
we then deduce
\begin{equation}\label{wt0}
  |w_i|\leq Y(t)=\frac{W_0^{(\eta)}}{1-\Gamma W_0^{(\eta)} t}\quad \text{for}\quad t<\min\Big\{\frac1{\Gamma W_0^{(\eta)}},t_0^{(\eta)}\Big\}.
\end{equation}
Noticing that $ t_0^{(\eta)}=O(\eta)$, it yields
\begin{equation}\label{gwt}
  \Gamma W_0^{(\eta)} t_0^{(\eta)}=O(\eta W_0^{(\eta)})=O(\theta).
\end{equation}
Invoking \eqref{gwt} in \eqref{wt0}, for a small parameter $\varepsilon\in(0,\frac1{100}]$ we hence obtain
\begin{equation*}
  |w_i(x,t)|\leq (1+\varepsilon)W_0^{(\eta)}\quad \text{for any}\ x\in\mathbb{R}\ \text{and}\ t\in[0,t_0^{(\eta)}].
\end{equation*}
This further implies that
\begin{equation}\label{y15}
  |W(t)|\leq (1+\varepsilon)W_0^{(\eta)}\quad \text{for}\  t\in[0,t_0^{(\eta)}].
\end{equation}
\item{Estimate of $V(t)$:} Consider the exterior of the characteristic strips $\mathcal{R}_i$. Employing characteristic coordinates, we can label any point $(x',t')\notin\mathcal{R}_i$ by $(z_i',s_i')$ satisfying $z'_i\notin[\eta,2\eta]$ and $w_i^{(\eta)}(z'_i,0)=0$. Integrating  \eqref{eqw} along the characteristic $\mathcal{C}_i$, we then obtain
\begin{equation}
  V(t)=O(\int_0^{t_0^{(\eta)}}w_iw_jds_i)=O(\eta [W(t)]^2)=O(\eta [W_0^{(\eta)}]^2).
\end{equation}
\item{Estimate of $S(t)$:}
By \eqref{eqrho}, the inverse density $\rho_i$ satisfies
\begin{equation} \label{rho est}
  \frac{\partial\rho_i}{\partial s_i}=O(\rho_iW).
\end{equation}
Integrating \eqref{rho est} along the characteristic $\mathcal{C}_i$, we obtain
\begin{equation*}
  \rho_i(z_i,t)=\rho_i(z_i,0)\exp\big(O(tW(t))\big).
\end{equation*}
Together with \eqref{319} and \eqref{y15}, we then derive that
\begin{equation*}
  \rho_i(z_i,t)=\exp\big(O(\eta W_0^{(\eta)})\big)\quad\text{for}\ t\in[0,t_0^{(\eta)}].
\end{equation*}
We can choose $\theta$ to be small enough such that
\begin{equation*}
 1-\varepsilon\leq\rho_i(z_i,t)\leq1+\varepsilon\quad \text{for}\ t\in[0,t_0^{(\eta)}].
\end{equation*}
Here, $\theta$ is a small parameter, that measures the amplitude of the initial data. Therefore, it holds
\begin{equation}\label{y16}
  S(t)=O(1)\quad \text{for}\ t\in[0,t_0^{(\eta)}].
\end{equation}
\item{Estimate of $J(t)$:}
Employing \eqref{eqv}, we have
\begin{equation*}
\frac{\partial v_i}{\partial s_i}=O(S(t)[W(t)]^2).
\end{equation*}
Integrating the above equation along $\mathcal{C}_i$, together with \eqref{y15} and \eqref{y16}, for $t\in[0,t_0^{(\eta)}]$ we obtain
\begin{equation}
  J(t)=O(W_0^{(\eta)}+t[W(t)]^2)=O(W_0^{(\eta)}+\eta [W_0^{(\eta)}]^2)=O(W_0^{(\eta)}).
\end{equation}
\item{Estimate of $\bar{U}(t)$:}
Note that
\begin{equation}\label{phi}
  \Phi(x,t)=\int_{X_5(\eta,t)}^x\frac{\partial \Phi(x',t)}{\partial x} dx'=\int_{X_5(\eta,t)}^x\sum_{k}w_kr_k(x',t)dx'.
\end{equation}
Using the estimate \eqref{y15}, we hence obtain the following estimate of $\bar{U}$,
\begin{equation}
   \bar{U}(t)=\sup_{(x',t')\atop 0\leq t'\leq t}|\Phi(x',t')|=O\big(W(t)(\eta+(\bar{\lambda}_1-\underline{\lambda}_5)t)\big)=O(\eta W_0^{(\eta)}),\quad \forall\ t\in[0,t_0^{(\eta)}].
\end{equation}

\end{itemize}
{\it  Separating region $[t_0^{(\eta)},T]$.} Based on the obtained estimates for $t\in[0,t_0^{(\eta)}]$, we next consider the time interval $t>t_0^{(\eta)}$. And the five characteristic strips $\mathcal{R}_i$ are all separated.
\begin{itemize}
\item{Estimate of $S(t)$:}
For $(x,t)\in \mathcal{R}_i$, by \eqref{eqrho}, we have
\begin{equation}\label{335}
  \frac{\partial \rho_i}{\partial s_i}=O(J+ V S).
\end{equation}
Integrating \eqref{335} along the characteristic $\mathcal{C}_i$, we obtain
\begin{equation*}
\rho_i(z_i, t)=\rho_0(z_i,0)+\int_0^tO(J+ V S)dt'.
\end{equation*}
We hence conclude that
\begin{equation}
  S(t)=O(1+tJ+ tV S).
\end{equation}
\item{Estimate of $J(t)$:}
For $(x,t)\in \mathcal{R}_i$, using \eqref{eqv}, $v_i$ satisfies
\begin{equation} \label{vi}
  \frac{\partial v_i}{\partial s_i}=O(VJ+ V^2 S).
\end{equation}
We then integrate \eqref{vi} and derive that
\begin{equation}\label{548}
  J(t)=O(W_0^{(\eta)}+tVJ+ tV^2 S).
\end{equation}
\item{Estimate of $V(t)$:}
Now we investigate the dynamics of $w_i$ outside the characteristic strip $\mathcal{R}_i$. Via \eqref{eqw}, we have that
\begin{equation}\label{558}
  \frac{\partial w_i}{\partial{s_i}}=O(V^2)+O\Big(\sum_{k\neq i}w_k\Big)V+O\Big(\sum_{m\neq i,k\neq i\atop m\neq k}w_mw_k\Big).
\end{equation}
Note that $\mathcal{C}_i$ issues from $z_i\notin[\eta,2\eta]$ and terminates at $(x,t)\notin\mathcal{R}_i$. If $t'\geq t_0^{(\eta)}$ and $\big(X_i(z_i,t'),t'\big)\in\mathcal{C}_i$, it holds either $\big(X_i(z_i,t'),t'\big)\in\big(\mathbb{R}\times[t_0^{(\eta)},t]\big)\setminus\bigcup_{k}\mathcal{R}_k$ or $\big(X_i(z_i,t'),t'\big)\in\mathcal{R}_k$ for some $k\neq i$.
\begin{center}
\begin{tikzpicture}
\draw(0,0)--(6,0)node[left,above]{$t=0$};
\draw[dashed](0,1.1)--(6,1.1)node[left,above]{$t=t_0^{(\eta)}$};
\node [below]at(3.5,0){$2\eta$};
\node [below]at(2.5,0){$\eta$};

\filldraw [black] (3.5,0) circle [radius=0.01pt]
(4,1) circle [radius=0.01pt]
(4.5,1.5) circle [radius=0.01pt]
(6,3) circle [radius=0.01pt];
\draw [color=black](3.5,0)..controls (4,1) and (4.5,1.5)..(6,3);

\filldraw [black] (2.5,0) circle [radius=0.01pt]
(3,1) circle [radius=0.01pt]
(3.5,1.5) circle [radius=0.01pt]
(5,3) circle [radius=0.01pt];
\draw [color=black](2.5,0)..controls (3,1) and (3.5,1.5)..(5,3);
\node [below] at(5.2,3){$\mathcal{R}_{i}$};

\filldraw [gray] (3.5,0) circle [radius=0.01pt]
(3,2) circle [radius=0.01pt]
(2.2,2.5) circle [radius=0.01pt]
(1.9,3) circle [radius=0.01pt];
\draw [color=gray](3.5,0)..controls (3,2) and (2.2,2.5)..(1.9,3);
\node [below] at(1.8,3){$\mathcal{R}_{k}$};

\filldraw [gray] (2.5,0) circle [radius=0.01pt]
(2,2) circle [radius=0.01pt]
(1.5,2.5) circle [radius=0.01pt]
(1,3) circle [radius=0.01pt];
\draw [color=gray] (2.5,0)..controls (2,2) and (1.5,2.5)..(1,3);

\filldraw [black] (1.3,0) circle [radius=0.01pt]
(1.5,0.7) circle [radius=0.01pt]
(2,1.5) circle [radius=0.01pt]
(3,2.3) circle [radius=0.8pt];
\draw [color=black,thick](1.3,0)..controls (1.5,0.7) and (2,1.5)..(3,2.3);
\node[above]at (3,2.3){$(x,t)$};
\node[below]at (1.3,0){$z_i$};
\end{tikzpicture}
\end{center}
When $(x,t)\notin\mathcal{R}_i$, for ${m\neq i,k\neq i, m\neq k}$, there are only three scenarios: $(x,t)$ stays in $\mathcal{R}_m$, or $(x,t)$ stays in $\mathcal{R}_k$, or $(x,t)$ stays out of all the characteristics. In all of these three cases, the third term $O\Big(\sum_{m\neq i,k\neq i\atop m\neq k}w_mw_k\Big)$ on the right hand side of \eqref{558} can be absorbed by the second term $O\Big(\sum_{k\neq i}w_k\Big)V$. To estimate $O\Big(\sum_{k\neq i}w_k\Big)V$ for $(x,t)\notin\mathcal{R}_i$, we further integrate \eqref{558} along $\mathcal{C}_i$. Noticing that $w_i^{(\eta)}(z_i,0)=0$, we then obtain
\begin{equation}\label{559}
\begin{split}
    w_i(x,t)=&O\Big(tV^2+V\sum_{k\neq i}\int_0^tw_k\big(X_i(z_i,t'),t'\big)dt'\Big)\\
    =&O\Big(tV^2+V\sum_{k\neq i}\int_0^{t_0^{(\eta)}}w_k\big(X_i(z_i,t'),t'\big)dt'\Big)\\
    &+O\Big(V\sum_{k\neq i}\int_{t_0^{(\eta)}}^t w_k\big(X_i(z_i,t'),t'\big)dt'\Big)\\
    =&O\Big(tV^2+\eta[W_0^{(\eta)}]^2+V\sum_{k\neq i}\underbrace{\int_{I_k^i}w_k\big(X_i(z_i,t'),t'\big)dt'}_{M}\Big),
\end{split}
\end{equation}
where we denote $I_k^i:=\{t'\in[t_0^{(\eta)},t]:(x,t')\in\mathcal{C}_i\bigcap\mathcal{R}_k\}$ for $k\neq i$. Here, we use the estimate $V(t)\leq W(t)=O(W_0^{(\eta)})$ for $t\leq t_0^{(\eta)}$ in the second equality. The bi-characteristic coordinates are then employed to control $M$ and we have
\begin{equation}\label{560}
  \begin{split}
    &\int_{I_k^i}w_k\big(X_i(z_i,t'),t'\big)dt'\\
    =&O\Big(\int_{y_k\in[\eta,2\eta]}\big|\frac{\rho_k\big(y_k,t'(y_i,y_k)\big)}{\lambda_i-\lambda_k}w_k\big(y_k,t'(y_i,y_k)\big)\big|dy_k\Big)\\
    =&O(\eta J).
  \end{split}
\end{equation}
Together with \eqref{559} and \eqref{560},  we thus obtain
\begin{equation}\label{355}
  V(t)=O(tV^2+\eta [W_0^{(\eta)}]^2+\eta VJ).
\end{equation}
\item{Estimate of $\bar{U}(t)$:}
We proceed to give the estimate for $\Phi$. If $(x,t)$ does not belong to any characteristic strip, by \eqref{phi}, there holds
\begin{equation}\label{u1}
 \bar{U}(t)=O\big((\eta+(\bar{\lambda}_1-\underline{\lambda}_5)t)V\big).
\end{equation}
If $(x,t)\in\mathcal{R}_k$ for some $k$, we derive
\begin{equation}\label{u2}
\begin{split}
  |\Phi(x,t)|=&\Big|\int_{X_k(\eta,t)}^x\frac{\partial \Phi(x',t)}{\partial x} dx'\Big|=\Big|\int_{X_k(\eta,t)}^x\sum_{k}w_kr_k(x',t)dx'\Big|\\
  \leq&\int_{X_k(\eta,t)}^{X_k(2\eta,t)}|w_k(x',t)|dx'=O\Big(\int_\eta^{2\eta}|w_k(x',t)|\rho_kdz_k\Big)\\
  =&O(\eta J).
  \end{split}
\end{equation}
Here we use the characteristic coordinates. Together with \eqref{u1} and \eqref{u2}, we derive
\begin{equation}
   \bar{U}(t)=O(\eta J+\eta V+\eta tV).
\end{equation}

\end{itemize}

We now finish the proof of the a priori estimates in \eqref{bds}-\eqref{bdu} and have obtained the $L^\infty$ estimates in \eqref{bound} via the aforementioned bootstrap argument.
\subsection{Finite-time blow-up of $w_1$}
We further derive an accurate lower bound and upper bound for $T_{\eta}^*$. When $t$ goes to $T_{\eta}^*$, the first family of characteristics within $\mathcal{R}_1$ collapse to a (first) shock. And the quantity $w_1$ blows up at the shock formation time.

We first impose a bootstrap assumption $v_1<0$, and we will later improve this bound. Recall that the equation for $\rho_1$ is
\begin{equation*}
  \frac{\partial\rho_1}{\partial s_1}=c_{11}^1(\Phi)v_1+O\Big(\sum_{ k\neq 1}w_k\Big)\rho_1.
\end{equation*}
Then noting the fact $c_{11}^1(\Phi)>0$, we have
\begin{equation}\label{381}
  -c_{11}^1|v_1|-\Big|O\Big(\sum_{ k\neq 1}w_k\Big)\Big|\rho_1\leq\frac{\partial\rho_1}{\partial s_1}\leq -c_{11}^1|v_1|+\Big|O\Big(\sum_{ k\neq 1}w_k\Big)\Big|\rho_1.
\end{equation}
Since $|\Phi|=O(\eta W_0^{(\eta)})\leq\delta$, we can choose $\theta$ to be sufficiently small such that
\begin{equation}\label{y37}
 (1-\varepsilon)c_{11}^1(0)\leq c_{11}^1(\Phi)\leq (1+\varepsilon)c_{11}^1(0).
\end{equation}
Employing bi-characteristic coordinates, we then deduce
\begin{equation*}
  \int_0^{t}\sum_{k\neq 1}w_k(X_1(z_1,t'),t')dt'=O(\eta W_0^{(\eta)}+\eta J)=O(\eta W_0^{(\eta)}),
\end{equation*}
which implies
\begin{equation}\label{385}
  1-\varepsilon\leq \exp{\Big(\int_0^{t}O\big(\sum_{k\neq 1}w_k(X_1(z_1,t'),t')\big)dt'\Big)}\leq1+\varepsilon,
\end{equation}
and
\begin{equation}\label{386}
  1-\varepsilon\leq \exp{\Big(-\int_0^{t}O\big(\sum_{k\neq 1}w_k(X_1(z_1,t'),t')\big)dt'\Big)}\leq1+\varepsilon.
\end{equation}
Applying Gr\"{o}nwall's inequality for \eqref{381} and combining with \eqref{y37}-\eqref{386}, we get
\begin{equation}\label{y38}
\begin{split}
  (1-\varepsilon)\Big(1-(1+\varepsilon)^2c_{11}^1(0)\int_0^t|v_1(z,t')|dt'\Big) \leq&\rho_1(z,t)\\
\leq& (1+\varepsilon)\Big(1-(1-\varepsilon)^2c_{11}^1(0)\int_0^t|v_1(z,t')|dt'\Big).
\end{split}
\end{equation}
Next, for $v_1$ we have
\begin{equation}\label{y39}
\frac{\partial v_1}{\partial s_1}=O\Big(\sum_{m\neq 1}w_m\Big)v_1+O\Big(\sum_{m,k\neq1}w_mw_k\Big)\rho_1.
\end{equation}
Integrating \eqref{y39} along $\mathcal{C}_1$, we obtain
\begin{equation*}
\begin{split}
  v_1(z,t)=w_1^{(\eta)}(z,0)+O(tVJ+tV^2S)
  =w_1^{(\eta)}(z,0)+O(\eta [W_0^{(\eta)}]^2).
\end{split}
\end{equation*}
Since $W_0^{(\eta)}=-w_1^{(\eta)}(z_0,0)$, the above inequality implies
\begin{equation*}
-(1+\varepsilon) W_0^{(\eta)}\leq v_1(z_0,t)\leq -(1-\varepsilon) W_0^{(\eta)}.
\end{equation*}
This improves the aforementioned bootstrap assumption $v_1<0$ and it implies
\begin{equation}\label{vz0}
(1-\varepsilon) W_0^{(\eta)}\leq |v_1(z_0,t)|\leq (1+\varepsilon) W_0^{(\eta)}.
\end{equation}
Employing \eqref{y38}, we further obtain that
\begin{equation}\label{711}
  \rho_1(z_0,t)\geq1-(1+\varepsilon)^4c_{11}^1(0)tW_0^{(\eta)}
\end{equation}
and
\begin{equation}
  \rho_1(z_0,t)\leq 1-(1-\varepsilon)^4c_{11}^1(0)tW_0^{(\eta)}.
\end{equation}
From \eqref{711}, we have that $\rho_1(z_0,t)>0$ when
\begin{equation*}
  t<\frac1{(1+\varepsilon)^4c_{11}^1(0)W_0^{(\eta)}}.
\end{equation*}
Meanwhile, we conclude that there exists $T_\eta^*$ (shock formation time) such that
\begin{equation} \label{lim rho1}
  \lim_{t\rightarrow T_\eta^*} \rho_1(z_0,t)=0.
\end{equation}
And $T_\eta^*$ obeys
\begin{equation} \label{Tshock}
 \frac1{(1+\varepsilon)^4c_{11}^1(0)W_0^{(\eta)}}\leq T_\eta^*\leq\frac{1}{(1-\varepsilon)^4c_{11}^1(0)W_0^{(\eta)}}.
\end{equation}
Finally, since $$w_1=\frac{v_1}{\rho_1},$$ as defined in \eqref{defvi}, by \eqref{vz0}, we deduce that 
$$
|w_1|\geq \frac{(1-\varepsilon)W_0^{(\eta)}}{\rho_1}.
$$
Then, using the positivity of $W_0^{(\eta)}$ and \eqref{lim rho1}, we conclude that
$w_1\to \infty$ when $t\to T_\eta^*$ in $\mathcal{R}_1$.

\subsection{Upper bound estimates for other $w_i$ ($i\neq 1$)}
For $w_i$ with $i\neq 1$, even though its evolution equation may be of the Riccati-type, its initial datum is designed to be much smaller compared with $w_1$'s datum. This guarantees that for $i\neq 1$ the evolution of $w_i(z_i,t)$ will not influence the blow-up of $w_1$ as $t\to T_\eta^*$. In particular, when $t\leq T_\eta^*$, we show below $\Big\{\bar W_i=\sup_{(z_i,s_i)\atop z_i\in[\eta,2\eta],\ 0\leq s_i\leq t}w_i(z_i,s_i)\Big\}_{i=2,3,4,5}$ are all bounded.

Invoking the estimate of $V$ into the equations of $w_i$ with $i\neq 1$, for $(x,t)\in\mathcal{R}_i$, we have that
\begin{equation*}
\begin{split}
  \frac{\partial w_i}{\partial{s_i}}=&-c^i_{ii}w_i^2+O\Big(\sum_{k\neq i}w_k\Big)w_i+\underbrace{O\Big(\sum_{m\neq i,k\neq i\atop m\neq k}w_mw_k\Big)}_{\text{containing weak interaction}}\\
  =&-c^i_{ii}w_i^2+O(V)w_i+O(V^2)\\
  =&-c^i_{ii}w_i^2+O\big(\eta[W_0^{(\eta)}]^2\big)w_i+O\big(\eta^2[W_0^{(\eta)}]^4\big).
  \end{split}
\end{equation*}
Equivalently, we rewrite the above equation as
\begin{small}
\begin{equation} \label{w7ode}
\frac{d}{ds}\Big[\exp\Big(O\big(\eta[W_0^{(\eta)}]^2\big)s\Big)w_i\Big]=-\exp\Big(O\big(\eta[W_0^{(\eta)}]^2\big)s\Big)c^i_{ii}w_i^2+\exp\Big(O\big(\eta[W_0^{(\eta)}]^2\big)s\Big)O\big(\eta^2[W_0^{(\eta)}]^4\big).
\end{equation}
\end{small}
Using \eqref{Tshock}, we then have
\begin{equation*}
\exp\Big(O\big(\eta[W_0^{(\eta)}]^2\big)s\Big)\leq \exp\Big(O\big(\eta[W_0^{(\eta)}]^2\big)T_\eta^*\Big)\leq \exp\Big(O\big(\eta W_0^{(\eta)}\big)\Big)=O(e^\theta).
\end{equation*}
Here, $\theta$ and $\eta$ are small parameters, which characterize the magnitude of the initial data and the size of their supports, respectively. Then, by choosing $\theta$ to be sufficiently small\footnote{In the shock formation argument of this section, $\theta$ may be taken independent of $\eta$. However, the $\eta$-dependence of $\theta$ will emerge and play a crucial role in the subsequent ill-posedness analysis in Section \ref{ill}.}, we further obtain
\begin{equation} \label{theta3}
1-\varepsilon\leq\exp\Big(O\big(\eta[W_0^{(\eta)}]^2\big)s\Big)\leq 1+\varepsilon.
\end{equation}
Thus, back to \eqref{w7ode}, it holds that
\begin{equation}\label{deqw7}
\begin{split}
\frac{d}{ds}\Big[\exp\Big(O\big(\eta[W_0^{(\eta)}]^2\big)s\Big)w_i\Big]
\leq Cw_i^2+O\big(\eta^2[W_0^{(\eta)}]^4\big).
\end{split}
\end{equation}
Integrating \eqref{deqw7} along $\mathcal{C}_i$, we get
\begin{equation}\label{621}
\begin{split}
\bar W_i&\leq O\big(w_i(z,0)+t\bar W_i^2+t\eta^2[W_0^{(\eta)}]^4\big)\\
&\leq O\big([W_0^{(\eta)}]^2+t\bar W_i^2+\eta^2[W_0^{(\eta)}]^3\big)\\
&\leq O\big([W_0^{(\eta)}]^2+t\bar W_i^2\big).
\end{split}
\end{equation}
Now we introduce an additional bootstrap assumption
\begin{equation} \label{boottcw}
t\bar W_i\leq \theta^{\frac12}.
\end{equation}
Then by \eqref{621}, it holds
\begin{equation} \label{Wcc7}
\bar W_i\leq  O\big([W_0^{(\eta)}]^2\big).
\end{equation}
And by \eqref{Tshock}, we get
\begin{equation}
t\bar W_i\leq  O(W_0^{(\eta)})=O(\theta)< \theta^{\frac12}.
\end{equation}
This improves the bootstrap assumption \eqref{boottcw} and gives the desired upper bound for $|w_i|$ with $i\neq1$.

\section{The ill-posedness and norm inflation}\label{ill}
We are ready to prove  Theorem \ref{2D}. Our ill-posedness result is twofold.

The first part is the instantaneous shock formation. This immediately follows from our construction \eqref{dataw0}\eqref{data2} for initial data and our estimate for the shock formation time \eqref{Tshock}. In particular, we obtain that, as $\eta \to 0$, the shock formation time $T^*_\eta \to 0 $ and the instantaneous shock forms.
\begin{remark} \label{ambient}
Note that the general 2D ideal compressible MHD system \eqref{MHD} forms a hyperbolic system of transport equations. In particular, by taking divergence of the third equation in \eqref{MHD}, one could see that the divergence-free condition $\nabla\cdot H=0$ can be transported all the way as long as this condition holds for the initial data. The governing maximal speed of this transport system would be the speed of the fast magnetosonic wave.
To apply our shock formation result in Section \ref{aprior}, we employ the finite-propagating-speed property (see Appendix \ref{FSP}), i.e., the maximal speed of the fast magnetosonic wave is finite.
\end{remark}

In this section, we also prove the second part of our result, which is the $H^1$ norm inflation. More precisely, within the planar symmetric region, we show that the $H^1$ norm of the solutions to 2D ideal compressible MHD system \eqref{MHD} blows up at the shock formation time $T_{\eta}^*$ in a future domain $\Omega_{T_\eta^*}$.

According to our previous analysis, the propagation-speed is at most $\lambda_1$. Hence, without loss of generality, we restrict our focus to the following region
\begin{equation} \label{init domain}
\Omega_0=\{(x,y_2):\left(x-\lambda_1(0) T_\eta^*-2\eta\right)^2+y_2^2 \leq  {\left(\lambda_1(0) T_\eta^*+\eta \right)}^2\}.
\end{equation}
For this particular choice, note that we get inspired by the following fact: according to finite speed of propagation (Appendix \ref{FSP}), the characteristic $\mathcal{C}_1(2\eta)$ issuing from $x=2\eta$ is a straight line with slope $\lambda_1(0)$. Considering the initial data constructed in Section \ref{data}, we note that
\begin{equation*}
 \Omega_0 \subseteq \Big\{(x,y_2): \psi\Big(\frac{|\ln(x)|^\delta y_2}{\sqrt{x}}\Big)=1,\ \text{if}\ x\leq 2\eta;\ \mathcal{X}\Big(\frac{x}{\eta}\Big)=0,\ \text{if}\ x>2\eta\Big\}
\end{equation*}
with $\psi\Big(\frac{|\ln(x)|^\delta y_2}{\sqrt{x}}\Big)$ and $\mathcal{X}\Big(\frac{x}{\eta}\Big)$ being cut-off functions given in \eqref{dataw0}. Therefore, we require the constructed initial data to be planar symmetric within $\Omega_0$, i.e., $\tilde{w}_i^{(\eta)}(x,y_2)|_{\Omega_0}=\tilde{w}_i^{(\eta)}(x)$ for $i=1,2,3,4,5$. Moreover, as the initial data are compactly supported within $[\eta,2\eta]$ in $x$-direction, we have that
$$T_\eta^*\sim\frac{1}{\theta|\ln \eta|^\alpha}.$$
Together with \eqref{init domain}, this further implies that, within this non-trivial region, the $x_2$-scale would be $O(\frac{\sqrt{\eta}}{\theta^\frac12|\ln \eta|^\frac{\alpha}{2}})$,
and it should satisfy
\begin{equation}
y_2^2\lesssim 2\eta T_\eta^*\sim \frac{\eta}{\theta |\ln\eta|^\alpha}\leq x_2^2\leq x|\ln x|^{-2\delta}\sim \eta|\ln \eta|^{-2\delta}.
\end{equation}
Therefore, we need to choose $\theta$ such that it satisfies
\begin{equation}
\theta\gtrsim|\ln\eta|^{2\delta-\alpha}   \quad \text{with}\quad 2\delta-\alpha<0.
\end{equation}

Considering the domain of dependence $\mathcal{D}(\Omega_0;T_\eta^*)$, we denote its $T_\eta^*$-slice as $\Omega_{T_\eta^*}$. Due to the finite speed of propagation (see Appendix \ref{FSP}), the information within $\Omega_{T_\eta^*}$ can be completely determined by the initial data in domain $\Omega_0$. See Figure \ref{plane sym} below. Based on the above construction, we can apply the planar symmetric shock formation argument within $\mathcal{D}(\Omega_0;T_\eta^*)$.
\begin{figure}[H]
\centering

\tikzset{every picture/.style={line width=0.75pt}} 

\begin{tikzpicture}[x=0.75pt,y=0.75pt,yscale=-1,xscale=1]
\filldraw[white, fill=gray!40] (219.32,243) .. controls (259.32,213.1) and (257,176) .. (297,146)--(389,146)--(463,243)--(219.32,243);
\filldraw[gray!40, fill=gray!80] (219.32,243) .. controls (259.32,213.1) and (257,176) .. (297,146)--(343,146)--(281,243)--(219.32,243);
\draw    (219.32,243) .. controls (259.32,213.1) and (257,176) .. (297,146) ;
\draw [shift={(297,146)}, rotate = 323.13] [color={rgb, 255:red, 0; green, 0; blue, 0 }  ][fill={rgb, 255:red, 0; green, 0; blue, 0 }  ][line width=0.75]      (0, 0) circle [x radius= 3.35, y radius= 3.35]   ;
\draw    (463,243) -- (219.32,243) ;
\draw [shift={(219.32,243)}, rotate = 179.51] [color={rgb, 255:red, 0; green, 0; blue, 0 }  ][fill={rgb, 255:red, 0; green, 0; blue, 0 }  ][line width=0.75]      (0, 0) circle [x radius= 3.35, y radius= 3.35]   ;
\draw    (389,146) -- (297,146) ;
\draw    (463,243) -- (389,146) ;
\draw [fill={rgb, 255:red, 155; green, 155; blue, 155 }  ,fill opacity=1 ] [dash pattern={on 4.5pt off 4.5pt}]  (281,243) -- (343,146) ;
\draw [shift={(343,146)}, rotate = 302.86] [color={rgb, 255:red, 0; green, 0; blue, 0 }  ][fill={rgb, 255:red, 0; green, 0; blue, 0 }  ][line width=0.75]      (0, 0) circle [x radius= 3.35, y radius= 3.35]   ;
\draw [shift={(281,243)}, rotate = 302.86] [color={rgb, 255:red, 0; green, 0; blue, 0 }  ][fill={rgb, 255:red, 0; green, 0; blue, 0 }  ][line width=0.75]      (0, 0) circle [x radius= 3.35, y radius= 3.35]   ;
\draw    (480,186) -- (441,186) ;
\draw [shift={(438,186)}, rotate = 360] [fill={rgb, 255:red, 0; green, 0; blue, 0 }  ][line width=0.08]  [draw opacity=0] (8.93,-4.29) -- (0,0) -- (8.93,4.29) -- cycle    ;

\draw (337,251.4) node [anchor=north west][inner sep=0.75pt]    {$\Omega _{0}$};
\draw (333,110.4) node [anchor=north west][inner sep=0.75pt]    {$\Omega _{T_{\eta }^{*}}$};
\draw (486,174.4) node [anchor=north west][inner sep=0.75pt]    {$\mathcal{D}(\Omega _{0};T_\eta^*)$};
\draw (342,184.4) node [anchor=north west][inner sep=0.75pt]    {$\Phi \equiv 0$};
\draw (210,249.4) node [anchor=north west][inner sep=0.75pt]    {$\eta $};
\draw (266,249.4) node [anchor=north west][inner sep=0.75pt]    {$2\eta $};

\end{tikzpicture}

\caption{\small{\bf Domain of dependence.} In this picture, the domain of dependence $\mathcal{D}(\Omega_0;T_\eta^*)$ corresponds to the whole colored trapezoid region. According to the finite speed of propagation, within $\mathcal{D}(\Omega_0;T_\eta^*)$, the solutions, as well as the initial data, are compactly supported in the dark gray region. While, the light gray region denotes the trivial part. Hence the dashed line in this figure is straight.}
\label{plane sym}
\end{figure}

We also point out for $\Phi\in B_\delta^5(0)$, $\Omega_{T_\eta^*}$ is uniformly close to the following 2D-disk
\begin{equation} \label{region}
\Omega_{T_\eta^*}\approx\{(x,y_2,T_\eta^*): x=X_1(z,T_\eta^*),\ (x-\lambda_1(0)T_\eta^*-2\eta)^2+y_2^2\leq\eta^2,  \  \text{for}\ \eta\leq z\leq2\eta \}.
\end{equation}
This implies that
\begin{equation}
\int_{(x,y_2)\in\Omega_{T_\eta^*}} dy_2 \approx(1+O(\varepsilon))\sqrt{\big(z-\eta+O(\varepsilon)\eta\big)\big(3\eta-z+O(\varepsilon)\eta\big)}.
\end{equation}

Before showing the blow-up of the solution's $H^1$-norm, we first derive an upper bound estimate for $|\partial_{z_1}\rho_1(z_1,s_1)|$.
\begin{prop} \label{dzrho1}
There exists a uniform constant $C$ depending only on $\varepsilon,\theta$ and $\eta$, such that for any $s_1\leq T_\eta^*$, there holds $|\partial_{z_1}\rho_1(z_1,s_1)|\leq C.$
\end{prop}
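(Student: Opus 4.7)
The plan is to derive a closed linear ODE for $\beta(z_1,s_1):=\partial_{z_1}\rho_1(z_1,s_1)$ along the first characteristic $\mathcal{C}_1(z_1)$ and then apply Gronwall's inequality on the interval $[0,T_\eta^*]$. Since $\rho_1=\partial_{z_1}X_1$ and the two partials commute in characteristic coordinates, differentiating $\partial_{s_1}X_1=\lambda_1(\Phi)$ gives $\partial_{s_1}\rho_1=\nabla_\Phi\lambda_1(\Phi)\cdot\partial_{z_1}\Phi$. Differentiating once more in $z_1$ yields $\partial_{s_1}\beta=\nabla_\Phi^2\lambda_1\cdot(\partial_{z_1}\Phi,\partial_{z_1}\Phi)+\nabla_\Phi\lambda_1\cdot\partial_{z_1}^2\Phi$, so the problem reduces to uniform control of $\partial_{z_1}\Phi$ and $\partial_{z_1}^2\Phi$ along $\mathcal{C}_1$.

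The pivotal algebraic observation is the cancellation
\[
\partial_{z_1}\Phi=\rho_1\,\partial_x\Phi=\rho_1\sum_{k}w_k r_k=v_1 r_1+\rho_1\sum_{k\neq 1}w_k r_k,
\]
which is uniformly bounded on $[0,T_\eta^*]$ by the $L^\infty$ bounds of Section~\ref{aprior}: $|v_1|\lesssim W_0^{(\eta)}$, $\rho_1\leq 1+\varepsilon$, $w_k$ is of size $O(V)$ or $O(\bar W_k)$ for $k\neq 1$, and $r_k,\nabla_\Phi\lambda_1$ are smooth in $\Phi\in B_{2\delta}^5(0)$. Expanding $\partial_{z_1}^2\Phi$ from the above formula produces four groups of terms: a $\beta$-term (bounded coefficient times $\beta$), a $\mu$-term with $\mu:=\partial_{z_1}v_1$, a $\partial_{z_1}w_m$-term for $m\neq 1$, and purely bounded inhomogeneities from $\nabla_\Phi r_j\cdot\partial_{z_1}\Phi$. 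In this way $\partial_{s_1}\beta=G_0\beta+F_0(\mu,\partial_{z_1}w_m)$ with $|G_0|,|F_0|$ dominated by bounded quantities plus the auxiliary derivatives $\mu$ and $\partial_{z_1}w_m$ ($m\neq 1$).

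To close the system, I derive a parallel linear ODE for $\mu$ by differentiating \eqref{eqv} at $i=1$ in $z_1$; the resulting inhomogeneity is again a bounded linear combination of $\beta$, $\mu$, and $\partial_{z_1}w_m$ with $m\neq 1$, with initial value $\mu(z_1,0)=\partial_{z_1}\hat{w}_1^{(\eta)}(z_1)=O(\theta|\ln\eta|^{\alpha-1}/\eta)$. For the off-diagonal pieces I rewrite $\partial_{z_1}w_m=(\rho_1/\rho_m)\,\partial_{z_m}w_m=:(\rho_1/\rho_m)\xi_m$ via the coordinate transforms. Since Section~\ref{aprior} yields $\rho_m\in[1-\varepsilon,1+\varepsilon]$ for $m\neq 1$, it suffices to bound $\xi_m$ by propagating an analogous linear ODE obtained from differentiating \eqref{eqw} along $\mathcal{C}_m$. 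Its initial value $\xi_m(z_m,0)=\partial_{z_m}\hat{w}_m^{(\eta)}(z_m)$ is of order $\theta^2/\eta$, and for the coefficients one again uses the Section~\ref{aprior} $L^\infty$ bounds together with $\partial_{z_m}\Phi$-type cancellations.

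The main obstacle is the term $\partial_{z_m}w_1=\rho_m\partial_x w_1$ that appears on the right-hand side of the $\xi_m$-equation: a priori $\partial_x w_1\to\infty$ as $t\to T_\eta^*$ inside $\mathcal{R}_1$. This is handled by the separation of characteristic strips established after time $t_0^{(\eta)}=\eta/\sigma$ (Figure~\ref{fig1}): for $t\geq t_0^{(\eta)}$ the characteristic $\mathcal{C}_m$ ($m\neq 1$) lies strictly outside $\mathcal{R}_1$, so $w_1$ along $\mathcal{C}_m$ satisfies the exterior estimate \eqref{558}–\eqref{559}, and differentiating that integral relation in $x$ gives $|\partial_x w_1|\leq C(\varepsilon,\theta,\eta)$ on $\mathcal{C}_m$. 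On the short non-separating slab $[0,t_0^{(\eta)}]$ of length $\eta/\sigma$, a direct Gronwall argument starting from the initial-data bounds of order $O(\theta|\ln\eta|^{\alpha}/\eta)$ propagates all first derivatives uniformly. With this, the coupled linear system for $(\beta,\mu,\xi_2,\ldots,\xi_5)$ has bounded coefficients and bounded inhomogeneities, so Gronwall on $[0,T_\eta^*]$ — where $T_\eta^*\leq 1/((1-\varepsilon)^4 c_{11}^1(0)W_0^{(\eta)})$ is finite depending on $\varepsilon,\theta,\eta$ — yields $|\beta(z_1,s_1)|\leq C(\varepsilon,\theta,\eta)$, as asserted.
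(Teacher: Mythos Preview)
There is a genuine gap in your handling of the ``main obstacle.'' The separation statement you invoke --- that for $t\geq t_0^{(\eta)}$ the characteristic $\mathcal{C}_m$ lies strictly outside $\mathcal{R}_1$ --- applies only to the strip $\mathcal{R}_m=\bigcup_{z_m\in[\eta,2\eta]}\mathcal{C}_m(z_m)$. But you need $\xi_m=\partial_{z_m}w_m$ at points on $\mathcal{C}_1(z_1)\subset\mathcal{R}_1$, since that is where the $(\beta,\mu)$-equations live. The $m$-characteristic through such a point necessarily enters $\mathcal{R}_1$ (transversally, for a time interval of length $O(\eta)$), and on that portion the inhomogeneity $\partial_{z_m}w_1=\rho_m\partial_x w_1=\rho_m(\mu-w_1\beta)/\rho_1^2$ appears on the right-hand side of the $\xi_m$-equation. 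As $s_1\to T_\eta^*$ and $\rho_1(z_0,\cdot)\to 0$, this factor is unbounded even after multiplication by the small coefficient $O(V)$ and integration over the short $O(\eta)$ crossing time; so the Gronwall step for $\xi_m$ does not close uniformly in $s_1\leq T_\eta^*$, and the circularity between $\xi_m$ and $(\beta,\mu)$ is not resolved.

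The paper sidesteps this by working in bi-characteristic coordinates $(y_1,y_5)$ and tracking only the pair $\tau_1^{(5)}:=\partial_{y_1}\rho_1$ and $\pi_1^{(5)}:=\partial_{y_1}v_1$. The point is that for every $m\neq 1$ one has $\partial_{y_1}=\dfrac{\rho_1}{\lambda_m-\lambda_1}\partial_{s_m}$, so $\partial_{y_1}\rho_m$ and $\partial_{y_1}v_m$ are expressed directly through the already-known transport equations \eqref{eqrho}--\eqref{eqv} (whose right-hand sides involve only $v$'s, $w$'s and $\rho$'s, never their spatial derivatives). This produces a genuinely closed $2\times2$ linear system for $(\tau_1^{(5)},\pi_1^{(5)})$ with bounded coefficients and inhomogeneities, and Gronwall gives the uniform bound; one then recovers $\partial_{z_1}\rho_1=\partial_{y_1}\rho_1+\dfrac{\rho_1}{2C_f}\partial_{s_1}\rho_1$. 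The essential difference from your plan is that the bi-characteristic identity converts the off-diagonal derivatives into known zeroth-order data, so no auxiliary unknowns $\xi_m$ --- and hence no integration along $\mathcal{C}_m$ through $\mathcal{R}_1$ --- are ever needed.
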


\begin{proof}
This proposition can be proven in the same fashion as we did for the 3D case in \cite{an,an2}. We outline the main steps here. Since the coordinate transformation obeys \eqref{biytoz}, it holds
\begin{equation}\label{rhod}
 \partial_{z_1}\rho_1=\partial_{y_1}\rho_1+\frac{\rho_1}{2C_f}\partial_{s_1}\rho_1.
\end{equation}
To control $\partial_{z_1}\rho_1$, we first estimate $\partial_{y_1}\rho_1$. Denote
\begin{equation*}
\tau_{1} ^{(5)}:=\partial_{y_1}\rho_1,\quad\pi_{1}^{(5)}:=\partial_{y_1}v_1.
\end{equation*}
Since by \eqref{biytoz}, it holds that
\begin{equation*}
  \partial_{y_5}=\frac{\rho_5}{\lambda_1-\lambda_5}\partial_{s_1}=\frac{\rho_5}{2C_f}\partial_{s_1},
\end{equation*}
we have that $\tau_{1}^{(5)}$ obeys
\begin{equation}\label{97}
\begin{split}
  \partial_{y_5}\tau_{1}^{(5)}=&\partial_{y_1}\partial_{y_5}\rho_1=\partial_{y_1}\Big(\frac{\rho_5}{2C_f}\partial_{s_1}\rho_1\Big)=\partial_{y_1}\Big(\frac{\rho_1\rho_5}{2C_f}\sum_m c_{1m}^1w_m\Big)\\
  =&\frac{\rho_5}{2C_f}\Big(c_{11}^1\partial_{y_1}v_1+\sum_{m\neq1}c_{1m}^1w_m \partial_{y_1}\rho_1\Big)+\frac{\rho_1}{2C_f}\Big(\sum_{m\neq4}c_{1m}^1w_m\partial_{y_1}\rho_5+c_{15}^1\partial_{y_1}v_5\Big)\\
  &+\frac{\rho_1\rho_5}{2C_f}\Big(\sum_{m=2,3,4}c_{1m}^1\frac1{\rho_m}\partial_{y_1}v_m-\sum_{m=2,3,4}c_{1m}^1\frac{w_m}{\rho_m}\partial_{y_1}\rho_m\Big)\\
  &-\frac{\partial_{y_1}C_f}{2C_f^2}\rho_1\rho_5\sum_m c_{1m}^1w_m+\frac{\rho_1\rho_5}{2C_f}\sum_m \partial_{y_1}c_{1m}^1w_m.
\end{split}
\end{equation}
In \eqref{97}, the quantities remaining to be estimated are: $\partial_{y_1}\lambda_1,\partial_{y_1}c_{1m}^1,\partial_{y_1}\rho_m$ and $\partial_{y_1}v_m$. Firstly, for $\partial_{y_1}\lambda_1$, using bi-characteristic coordinates $(y_1,y_i)$ ($i\neq1$), it satisfies
\begin{equation}\label{9.11}
  \begin{split}
    \partial_{y_1}\lambda_1=&\nabla_\Phi\lambda_1\cdot\partial_{y_1}\Phi=\nabla_\Phi\lambda_1\cdot[\partial_{s_i}X_i\partial_{y_1}t'\partial_{x}\Phi+\partial_{y_1}t'\partial_{t}\Phi]\\
    =&\nabla_\Phi\lambda_1\cdot\Big[\lambda_i\frac{\rho_1}{\lambda_i-\lambda_1}\sum_kw_kr_k+\frac{\rho_1}{\lambda_i-\lambda_1}(-A(\Phi)\sum_kw_kr_k)\Big]\\
    =&O(v_1+\rho_1\sum_{k\neq1}w_k).
  \end{split}
\end{equation}
Similarly, we get
\begin{equation}\label{9.12}
  \begin{split}
    \partial_{y_1}c_{1m}^1=&\nabla_\Phi c_{1m}^1\cdot\partial_{y_1}\Phi=\nabla_\Phi c_{1m}^1\cdot[\partial_{s_i}X_i\partial_{y_1}t'\partial_{x}\Phi+\partial_{y_1}t'\partial_{t}\Phi]\\
    =&\nabla_\Phi c_{1m}^1\cdot\Big[\lambda_i\frac{\rho_1}{\lambda_i-\lambda_1}\sum_kw_kr_k+\frac{\rho_1}{\lambda_i-\lambda_1}(-A(\Phi)\sum_kw_kr_k)\Big]\\
    =&O\Big(v_1+\rho_1\sum_{k\neq1}w_k\Big).
  \end{split}
\end{equation}
Next, by \eqref{eqrho}-\eqref{eqv}, we have
\begin{equation}\label{9.14}
  \partial_{y_1}\rho_m=\frac{\rho_1}{\lambda_m-\lambda_1}\partial_{s_m}\rho_m=O\Big(\rho_mv_1+\rho_1\rho_m\sum_{k\neq 1}w_k\Big) \quad \text{when}\quad m\neq 1,
\end{equation}
and
\begin{equation}\label{9.15}
  \partial_{y_1}v_m=\frac{\rho_1}{\lambda_m-\lambda_1}\partial_{s_m}v_m=O\Big(\rho_mv_1\sum_{k\neq1}w_k+\rho_1\rho_m\sum_{j\neq1,k\neq1\atop j\neq k}w_jw_k\Big)\quad \text{when}\quad m\neq 1.
\end{equation}
Therefore, by \eqref{9.11}-\eqref{9.15}, one can rewrite \eqref{97} as
\begin{equation}\label{linear}
\begin{split}
  \partial_{y_5}\tau_{1}^{(5)}:=B_{11}\tau_1^{(5)}+B_{12}\pi_1^{(5)}+B_{13}
\end{split}
\end{equation}
with $B_{11},B_{12},B_{13}$ being uniform constants depending on $\eta$.

Next, we deduce the following evolution equation for $\pi_1^{(5)}$ in the same manner
\begin{equation}\label{917}
  \begin{split}
  \partial_{y_5}\pi_{1}^{(5)}
  =&\frac{\rho_5 }{2C_f}\Big(\sum_{p\neq 1,q\neq 1\atop p\neq q}\gamma_{pq}^1w_pw_q \tau_{1}^{(5)}+\sum_{p\neq 1}\gamma_{1p}^1w_p\pi_{1}^{(5)}\Big)\\
  &-\frac{\partial_{y_1}C_f}{2C_f^2}\Big(\sum_{p\neq 1}\gamma_{1p}^1 w_p v_1\rho_5+\sum_{p\neq 1,q\neq 1\atop p\neq q}\gamma_{pq}^1w_pw_q\rho_5\rho_1\Big)\\
    &+\frac{\rho_5\rho_1}{2C_f}\Big(\sum_{p\neq 1}\partial_{y_1}\gamma_{1p}^1 w_p w_m+\sum_{p\neq 1,q\neq 1\atop p\neq q}\partial_{y_1}\gamma_{pq}^1w_pw_q\Big)\\
    &+\frac{\rho_1}{2C_f}\Big(\sum_{p\neq 1}\gamma_{1p}^1 w_p w_1+\sum_{p\neq 1,q\neq 1\atop p\neq q}\gamma_{pq}^1w_pw_q\Big)\partial_{y_1}\rho_5\\
    &+\frac{\rho_5\rho_1}{2C_f}\Big(\sum_{p=2,3,4}\gamma_{1p}^1\frac{w_1}{\rho_p}+\sum_{p\neq 1,q\neq 1\atop p\neq q}\gamma_{pq}^1\frac{w_q}{\rho_p}\Big)(\partial_{y_1}v_p-w_p\partial_{y_1}\rho_p)\\
    &+\frac{\rho_5\rho_1}{2C_f }\sum_{p\neq 1,q\neq 1\atop p\neq q}\gamma_{pq}^1\frac{w_p}{\rho_q}(\partial_{y_1}v_q-w_q\partial_{y_1}\rho_q)
    +\frac{\rho_1}{2C_f }\gamma_{15}^1 w_1\partial_{y_1}v_5\\
    :=&B_{21}\tau_1^{(5)}+B_{22}\pi_1^{(5)}+B_{23}.
  \end{split}
\end{equation}
Similarly to \eqref{linear}, the constants $B_{21},B_{22},B_{23}$ here are also uniformly bounded.

Then, we bound the initial data of $\tau_1^{(5)}$ and $\pi_1^{(5)}$. It follows from \eqref{biytoz} and \eqref{319} that
\begin{equation*}
  \begin{split}
    \tau_1^{(5)}(z_1,0)=&\partial_{z_1}\rho_1(z_1,0)-\frac{\rho_1(z_1,0)}{2C_f}\partial_{s_1}\rho_1(z_1,0)\\
     =&-\frac{1}{2C_f}\sum_{k}c_{1k}^1 w_k(z_1,0)=O(W_0^{(\eta)})<+\infty.
  \end{split}
\end{equation*}
And since $v_1^{(\eta)}(z_1,0)=w_1^{(\eta)}(z_1,0)$, we similarly obtain
\begin{equation*}
  \begin{split}
    \pi_1^{(5)}(z_1,0)=&\partial_{z_1}v_1(z_1,0)-\frac{\rho_1(z_1,0)}{2C_f}\partial_{s_1}v_1(z_1,0)\\
    =&\partial_{z_1}w_1(z_1,0)-\frac{1}{2C_f}\sum_{q\neq 1,q\neq p}\gamma_{pq}^1w_p(z_1,0)w_q(z_1,0)\rho_1(z_1,0)\\
    =&O(\partial_{z_1}w_1(z_1,0)+[W_0^{(\eta)}]^2)<+\infty.
  \end{split}
\end{equation*}
Now by applying Gr\"onwall's inequality to \eqref{linear} and \eqref{917}, for $s_1\leq T_\eta^*$ we deduce that $\tau_{1}^{(5)}:=\partial_{y_1}\rho_1(z_1,s_1)$ is bounded on $z_1\in[\eta,2\eta]$.

Back to \eqref{rhod}, invoking all these estimates, we hence prove
\begin{equation*}
  \partial_{z_1}\rho_1=\partial_{y_1}\rho_1+O(v_1+\sum_{m\neq 1}w_m\rho_1).
\end{equation*}
Employing the estimates for $J(t)$, $S(t)$ and $V(t)$ obtained in Section \ref{aprior}, consequently, we conclude that $|\partial_{z_1}\rho_1|$ is uniformly bounded by a uniform constant $C$. \end{proof}

Next, we begin to estimate the $H^1$ norm of the solution. The integration is taken in a subinterval $(z_0,z_0^*]\subseteq[\eta,2\eta]$ where $|w_1^{(\eta)}(z,0)|>\frac12W_0^{(\eta)}$ for $z\in (z_0,z_0^*]$. In this subinterval, it follows from \eqref{vz0} that $|v_1(z,t)|$ admits a lower bound $|v_1(z,t)|\geq \frac14W_0^{(\eta)}$. Then, together with the shock formation at $(z_0,T_\eta^*)$, i.e., $\lim\limits_{t\to T_\eta^{*}}\rho_1(z_0,t)=0$ and Proposition \ref{dzrho1}, we obtain
\begin{equation*}
\begin{split}
  &\lim\limits_{t\to T_\eta^{*}}\|w_1(\cdot,t)\|_{L^2(\Omega_{t})}^2\\
  \geq &C\lim\limits_{t\to T_\eta^{*}}\int_{\eta}^{2\eta}\Big|\frac{v_1}{\rho_1}(z,t)\Big|^2\rho_1(z,t) \sqrt{\big(z-\eta+O(\varepsilon)\eta\big)\big(3\eta-z+O(\varepsilon)\eta\big)}dz\\
\geq& C\sqrt{(z_0-\eta)(3\eta-z_0^*)}[W_0^{(\eta)}]^2\lim\limits_{t\to T_\eta^{*}}\int_{z_0}^{z_0^*}\frac{1}{\rho_1(z,t)-\rho_1(z_0,t)}dz\\
\geq& C\sqrt{(z_0-\eta)(3\eta-z_0^*)}[W_0^{(\eta)}]^2\lim\limits_{t\to T_\eta^{*}}\int_{z_0}^{z_0^*}\frac{1}{(\sup_{z\in(z_0,z_0^*]}|\partial_z\rho_1(z,t)|)(z-z_0)}dz\\
\geq& C_\eta\int_{z_0}^{z_0^*}\frac{1}{z-z_0}dz=+\infty.
  \end{split}
\end{equation*}
Here we crucially use the property $\lim\limits_{t\to T_\eta^{*}}\rho_1(z_0,t)=0$ of shock formation and the boundedness of $\partial_{z_1}\rho_1$ in Proposition \ref{dzrho1}. Thus, the proved ill-posedness is driven by instantaneous shock formation.

Finally, we go back to the original 2D ideal MHD system \eqref{MHD}. Noting that since the region $\Omega_{t}$ stays in $\mathcal{R}_1$, therefore $w_i\big|_{\Omega_{t}}$ are controlled by $V(t)$ for $i=2,3,4,5$. Combining this with the fact that $|r_k|=O(1)$, we hence obtain
\begin{equation*}
  \begin{split}
    \lim\limits_{t\to T_\eta^{*}}\|\partial_xu_1\|_{L^2(\Omega_{t})}=&\lim\limits_{t\to T_\eta^{*}}\|\sum_{k=1}^5 w_k r_{k1}\|_{L^2(\Omega_{t})}\geq C\lim\limits_{t\to T_\eta^{*}}\Big(\|w_1\|_{L^2(\Omega_{t})}-\sum_{k=2,4,5}\|w_k \|_{L^2(\Omega_{t})}\Big)\\   \geq&C\lim\limits_{t\to T_\eta^{*}}\Big(|w_1\|_{L^2(\Omega_{t})}-3V( t)|\Omega_{t}|^{\frac12}\Big)\\
    \geq&C\lim\limits_{t\to T_\eta^{*}}\Big(\|w_1\|_{L^2(\Omega_t)}-3\eta^2W_0^{(\eta)}\Big)= +\infty
  \end{split}
\end{equation*}
and
\begin{equation*}
  \begin{split}
    \lim\limits_{t\to T_\eta^{*}}\|\partial_x \varrho\|_{L^2(\Omega_t)}=&\lim\limits_{t\to T_\eta^{*}}\|\sum_{k=1}^5 w_k r_{k3}\|_{L^2(\Omega_{t})}\geq C\lim\limits_{t\to T_\eta^{*}}\Big(\|w_1\|_{L^2(\Omega_{t})}-\sum_{k=2,3,4,5}\|w_k \|_{L^2(\Omega_{t})}\Big)\\
    \geq&C\lim\limits_{t\to T_\eta^{*}}\Big(\|w_1\|_{L^2(\Omega_{t})}-4\eta^2W_0^{(\eta)}\Big)= +\infty.
  \end{split}
\end{equation*}
This concludes the proof of Theorem \ref{2D} when $\kappa\neq 0$.

\section{Proof for $\kappa=0$ case}\label{h10}
In this section, we deal with the case of $\kappa=0$. We first explore the detailed structures of the decomposed system for this scenario. Then we prove the shock formation and the desired ill-posedness. Compared with the $\kappa\neq 0$ case, the system here is non-strictly hyperbolic. Nonetheless, as will be explained below, since the non-separating characteristic speeds are identical, we can treat the corresponding characteristics as in the same family. The arguments in Section \ref{aprior} and Section \ref{ill} then apply to the $\kappa=0$ case.

Under planar symmetry, when $H_1=\kappa=0$, $\Phi=(u_1,u_2,\varrho-1, H_2,S)^T$ satisfies:
\begin{equation}\label{eq0}
\partial_t\Phi+A(\Phi)\partial_x\Phi=0
\end{equation}
with the coefficient matrix being
\begin{equation} \label{0MHD}
A(\Phi)=\begin{pmatrix}
u_1 & 0 & c^2/\varrho & \mu_0H_2/\varrho &  c^2/\gamma \\
0 & u_1 & 0 & 0 &0 \\
\varrho & 0 & u_1 & 0 & 0 \\
H_2 & 0 & 0 &u_1 &  0 \\
0 & 0 & 0 & 0 &  u_1
\end{pmatrix}.
\end{equation}
The eigenvalues of \eqref{0MHD} are
\begin{equation} \label{egv0}
\begin{split}
&\lambda_1=u_1+C_f,\quad \lambda_2=\lambda_3=\lambda_4=u_1,\quad \lambda_5=u_1-C_f,
\end{split}
\end{equation}
where $C_f$ is defined by
$$C_f=\sqrt{\frac{\mu_0H_2^2}{\varrho}+c^2}.$$
And the eigenvalues satisfy
\begin{equation*}
\begin{split}
\lambda_5<\lambda_4=\lambda_3=\lambda_2<\lambda_1.
\end{split}
\end{equation*}
Since $A(\Phi)$ admits an eigenvalue of triple multiplicity, the system \eqref{eq0} is not strictly hyperbolic. We choose the right eigenvectors as:
{\begin{equation} \label{rvec0}
r_1=\left(\begin{array}{cc} \frac{C_f}{\varrho}\\0\\1\\ \frac{H_2}{\varrho}\\0\end{array}\right),
r_2=\left(\begin{array}{cc}0\\1\\0\\0\\0\end{array}\right),
r_3=\left(\begin{array}{cc}0\\0\\1\\0\\ -\frac{\gamma}{\varrho}\end{array}\right),
r_4=\left(\begin{array}{cc}0\\0\\0\\1\\-\frac{\gamma\mu_0H_2}{\varrho c^2}\end{array}\right),
r_5=\left(\begin{array}{cc} -\frac{C_f}{\varrho}\\0\\1\\\frac{H_2}{\varrho}\\0\end{array}\right).
\end{equation}}
And the left eigenvectors are set to be the dual of the right ones:
\begin{equation} \label{dual}
l_ir_j=\delta_{ij}.
\end{equation}
Via calculations, we construct the following left eigenvectors
\begin{equation}
\begin{split}
&l_1=\Big(\frac{\varrho}{2C_f},0,\frac{c^2}{2 C_f^2},\frac{\mu_0H_2}{2 C_f^2},\frac{\varrho c^2}{2\gamma C_f^2}\Big),\\
& l_2=(0,1,0,0,0),\\
& l_3=\Big(0,0,\frac{\mu_0 H_2^2}{\varrho C_f^2},-\frac{\mu_0 H_2}{C_f^2},-\frac{\varrho c^2}{\gamma C_f^2}\Big),\\
&l_4=\Big(0,0,-\frac{c^2H_2}{\varrho C_f^2},\frac{c^2}{ C_f^2},-\frac{ c^2 H_2}{\gamma C_f^2}\Big)\\
&l_5=\Big(-\frac{\varrho}{2C_f},0,\frac{c^2}{2 C_f^2},\frac{\mu_0H_2}{2 C_f^2},\frac{\varrho c^2}{2\gamma C_f^2}\Big),
\end{split}
\end{equation}
which satisfies \eqref{dual}. With these eigenvectors, we further decompose $\partial_x \Phi$ as in Section \ref{decom} and obtain:
\begin{align}
  \partial_{s_i}\rho_i=&c_{ii}^iv_i+\Big(\sum_{m\neq i}c_{im}^iw_m\Big)\rho_i,\label{eqrho0}\\
  \partial_{s_i}w_i=&-c_{ii}^iw_i^2+\Big(\sum_{m\neq i}(-c_{im}^i+\gamma_{im}^i)w_m\Big)w_i+\sum_{m\neq i,k\neq i\atop m\neq k}\gamma_{km}^iw_kw_m,\label{eqw0}\\
  \partial_{s_i}v_i=&\Big(\sum_{m\neq i}\gamma_{im}^iw_m\Big)v_i+\sum_{m\neq i,k\neq i\atop m\neq k}\gamma_{km}^iw_kw_m\rho_i,\label{eqv0}
\end{align}
where
 \begin{align*}
&c_{im}^i=\nabla_\Phi\lambda_i\cdot r_m,\\
  &\gamma_{im}^i=-(\lambda_i-\lambda_m)l_i \cdot(\nabla_\Phi r_i \cdot r_m-\nabla_\Phi r_m \cdot r_i),\quad m\neq i,\\
  &\gamma_{km}^i=-(\lambda_k-\lambda_m)l_i \cdot (\nabla_\Phi r_k \cdot r_m), \qquad\qquad\qquad k\neq i,\  m\neq i.
\end{align*}
Since $H_2$ is small around zero, one can check that $C_f\approx c$. Thus $r_i$, $\nabla_\Phi r_i$ and $l_i$ are of order $O(1)$. Hence the coefficients $c_{im}^i,\gamma_{im}^i,\gamma_{km}^i$ are all uniformly bounded by $O(1)$. Especially, we have
\begin{equation*}
\begin{split}
c_{11}^1(0)=&\nabla_\Phi\lambda_1(0)\cdot r_1(0)\\
=&\Big(1,0,\frac{(\gamma-1)\sqrt{A\gamma}}{2},0,A\gamma\Big)\cdot (\sqrt{A\gamma},0,1,0,0)^T\\
=&\frac{(\gamma+1)\sqrt{A\gamma}}{2}>0.
\end{split}
\end{equation*}
The first family of characteristic is now genuinely nonlinear. The main difference between the case of $H_1=0$ and $H_1\neq0$ is the non-strict hyperbolicity. For $\lambda_2=\lambda_3=\lambda_4$, the corresponding characteristic strips $\mathcal{R}_2,\mathcal{R}_3,\mathcal{R}_4$ are the same. Our strategy is to consider characteristic waves that propagate in and out of the following three characteristic strips: $\{\mathcal{R}_1,\mathcal{R}_{\bar2},\mathcal{R}_5\}$, where $\mathcal{R}_{\bar2}:=\mathcal{R}_{2}=\mathcal{R}_{3}=\mathcal{R}_{4}$. These three strips will be completely separated when $t>t_0^{(\eta)}$. See the following figure.
\begin{figure}[H]
\centering
\begin{tikzpicture}[fill opacity=0.5, draw opacity=1, text opacity=1]
\node [below]at(3.5,0){$2\eta$};
\node [below]at(2.5,0){$\eta$};

\filldraw[white, fill=gray!40](3.5,0)..controls (3,1) and (1.8,2.6)..(1,3.5)--(3.2,3.5)..controls (2.1,1.5) and (2.6,1)..(2.5,0);
\filldraw[white, fill=gray!40](3.5,0)..controls (3.4,1) and (3.2,1.5)..(4.8,3.5)--(6.5,3.5)..controls (3.8,1.5) and (3,1)..(2.5,0);

\filldraw[white,fill=gray!80](2.5,0)..controls (3,1) and (3.8,1.5)..(6.5,3.5)--(8.2,3.5)..controls (5,1.5) and (4.3,1)..(3.5,0);
\filldraw[white,fill=gray!80](2.5,0)..controls (2.6,1) and (2.1,1.5)..(3.2,3.5)--(4.8,3.5)..controls (3.2,1.5) and (3.4,1)..(3.5,0);
\filldraw[white,fill=gray!80](2.5,0)..controls (1.2,2.2) and (0.5,2.8)..(-0.5,3.5)--(1,3.5)..controls (1.8,2.6) and (3,1)..(3.5,0);

\draw[->](-0.5,0)--(8.2,0)node[left,below]{$t=0$};
\draw[dashed](-0.5,1.7)--(8.2,1.7)node[right,below]{$t=t_0^{(\eta)}$};

\draw [color=black](3.5,0)..controls (4.3,1) and (5,1.5)..(8.2,3.5);

\draw [color=black](2.5,0)..controls (3,1) and (3.8,1.5)..(6.5,3.5);
\node [below] at(6.2,3){$\mathcal{R}_1$};

\draw [color=black](3.5,0)..controls (3.4,1) and (3.2,1.5)..(4.8,3.5);

\draw [color=black](2.5,0)..controls (2.6,1) and (2.1,1.5)..(3.2,3.5);
\node [below] at(3.5,3){$\mathcal{R}_{\bar2}$};

\draw [color=black](3.5,0)..controls (3,1) and (1.8,2.6)..(1,3.5);
\node [below] at(1.1,3){$\mathcal{R}_5$};

\draw [color=black] (2.5,0)..controls (1.2,2.2) and (0.5,2.8)..(-0.5,3.5);
\end{tikzpicture}
\caption{\small\bf Separation of three characteristic strips.}
\end{figure}
Fortunately, according to the decomposed system \eqref{eqrho0}-\eqref{eqv0}, the interaction terms of the $i^{\text{th}}$ and the $m^{\text{th}}$ characteristic waves vanish for $i,m\in\{2,3,4\}$. In particular, since $\nabla_\Phi\lambda_i=(1,0,0,0,0)$ and $r_{m1}=0$, one can see that all $c_{im}^i=\nabla_\Phi\lambda_i\cdot r_m=0$ for $i,m\in\{2,3,4\}$. Since there is the factor $\lambda_i-\lambda_m$ in $\gamma_{im}^i$, we further get $\gamma_{im}^i=0$ for $i,m\in\{2,3,4\}$. And for $i\in\{1,\cdots,5\}$ and $k,m\in\{2,3,4\}$, similarly we have $\gamma_{km}^i=0$.

Proceeding in the same fashion as in \cite{an,an2}, we introduce the following quantity:
\begin{equation}
V(t)=\max\Big\{\sup_{(x',t')\notin\mathcal{R}_1,\atop 0\leq t'\leq t}|w_1(x',t')|,\sup_{(x',t')\notin\mathcal{R}_5,\atop 0\leq t'\leq t}|w_5(x',t')|,\ V_{\bar{2}}(t)\Big\},
\end{equation}
where
\begin{align*}
V_{\bar{2}}(t):=\sup_{(x',t')\notin\mathcal{R}_{\bar{2}},\atop 0\leq t'\leq t}\{|w_2(x',t')|,|w_3(x',t')|,|w_4(x',t')|\}.
\end{align*}
With the same initial data designed in \eqref{dataw0} and \eqref{data2}, for $t\in[0,T_\eta^*)$, we get the following estimates in a similar manner as in Section \ref{aprior}
 \begin{align}\label{bound0}
  S(t)=O(1),\ J(t)=O(W_0^{(\eta)}),\
 V(t)=O\Big(\eta [W_0^{(\eta)}]^2\Big),\ \bar{U}(t)&=O(\eta W_0^{(\eta)}),
\end{align}
and
\begin{equation} \label{Tshock0}
 \frac1{(1+\varepsilon)^4c_{11}^1(0)W_0^{(\eta)}}\leq T_\eta^*\leq\frac{1}{(1-\varepsilon)^4c_{11}^1(0)W_0^{(\eta)}}.
\end{equation}
Here $T_\eta^*$ is the shock formation time. In particular, a shock forms as $t\rightarrow T_\eta^*$, i.e.,
\begin{equation*}
  \lim_{t\rightarrow T_\eta^*} \rho_1(z_0,t)=0.
\end{equation*}
Via the same argument as in Section \ref{ill}, we have that the $H^1$ norm of the solution to \eqref{0MHD} blows up at $T_\eta^*$. Hence the $H^\frac74$ ill-posedness stated in Theorem \ref{2D} is also true when $\kappa=0$.

\section{$H^\frac74$ ill-posedness for 2D compressible Euler equations} \label{euler ill}
When the  magnetic field vanishes, i.e., $H_1=H_2=0$, the MHD system \eqref{MHD} reduces to the compressible Euler system:
\begin{equation}\label{euler}
\left\{\begin{split}
&\partial_t\varrho+\nabla\cdot(\varrho u)=0,\\
&\varrho\{\partial_t+(u\cdot\nabla)\}u+\nabla p=0,\\
&\partial_t S+(u\cdot\nabla)S=0.
\end{split}
\right.
\end{equation}
Now define
$$\tilde{\mathcal{U}}(x_1,x_2,t)=(u_1,u_2,\varrho,S)^T(x_1,x_2,t).$$
And we denote its planar symmetric solution to be
$$\tilde{\Phi}:=(u_1,u_2,\varrho-1,S)^T(x_1,t)=(u_1,u_2,\varrho-1,S)^T(x,t).$$
Under plane symmetry, the 2D compressible Euler system \eqref{euler} then reads
\begin{equation} \label{ppeuler}
\partial_t\tilde{\Phi}+A(\tilde{\Phi})\partial_x\tilde{\Phi}=0
\end{equation}
with
\begin{equation} \label{matreuler}
A(\tilde{\Phi})=\begin{pmatrix}
u_1 & 0 & c^2/\varrho & c^2/\gamma \\
0 & u_1 & 0 & 0 \\
\varrho & 0 & u_1 &  0 \\
0 &  0& 0 &  u_1
\end{pmatrix}.
\end{equation}
By a direct calculation, we have the eigenvalues of \eqref{matreuler} are
\begin{equation} \label{egveuler}
\lambda_1=u_1+c,\quad\lambda_2=\lambda_3=u_1,\quad \lambda_4=u_1-c.
\end{equation}
This indicates that the Euler system under planar symmetry is not strictly hyperbolic. We then choose the right eigenvectors as:
\begin{equation} \label{rveceuler}
\begin{split}
&r_1=\left(\begin{array}{cc}1\\0\\\varrho/c\\0\end{array}\right),
r_2=\left(\begin{array}{cc}0\\1\\0\\0\end{array}\right),
r_3=\left(\begin{array}{cc}0\\0\\1\\-\gamma/\varrho\end{array}\right),
r_5=\left(\begin{array}{cc}1\\0\\-\varrho/c\\0\end{array}\right).
\end{split}
\end{equation}
The left eigenvectors are set dual to the right ones, i.e.,
\begin{equation}
l_1=(\frac12,0,\frac{c}{2\varrho },\frac{c}{2\gamma}),\quad l_2=(0,1,0,0),\quad
l_3=(0,0,0,-\frac{\varrho}{\gamma}),\quad\quad l_4=(\frac12,0,-\frac{c}{2\varrho },-\frac{c}{2\gamma}).
\end{equation}

Then we derive the decomposed system of $w_i,\rho_i,v_i$ as in \eqref{eqrho0}-\eqref{eqv0} with bounded coefficients $c_{im}^i,\gamma_{im}^i,\gamma_{km}^i$. In particular, the first family of characteristics is genuinely nonlinear because
\begin{equation*}
\begin{split}
c_{11}^1(0)=&\nabla_\Phi\lambda_1(0)\cdot r_1(0)
=\Big(1,0,(\gamma-1)\sqrt{A\gamma},1\Big)\cdot\Big(1,0,\frac{1}{\sqrt{A\gamma}},0\Big)^T
=\gamma>0.
\end{split}
\end{equation*}
Note that the interaction terms of the second and third characteristic waves vanish. The desired $H^\frac74$ ill-posedness follows from the argument as in Section \ref{h10}.
\begin{remark}
In view of the low-regularity local well-posedness result in \cite{zhang} by Zhang and our ill-posedness result, with respect to the regularity of fluid velocity and density, $H^\frac{7}{4}$ naturally serves as the threshold for ill-posedness of 2D compressible Euler equations. Moreover, we introduce the vorticity $\omega=-\partial_{x_2}u_1+\partial_x u_2$. Under plane symmetry, the vorticity $\omega=\partial_x u_2$ is regular since
\begin{equation*}
    \partial_x u_2=\sum_{k=1}^4 w_k r_{k2}=w_2.
\end{equation*}
The system \eqref{ppeuler} therefore allows non-trivial vorticity and entropy.
\end{remark}

\appendix
\section{Finite speed of propagation for 2D ideal MHD} \label{FSP}
In this appendix, we show the finite speed of propagation for the 2D ideal compressible MHD system without any symmetry condition. There are two main steps:
\begin{itemize}
\item Step 1. rewrite the system as a quasilinear symmetric hyperbolic system.\\
 \item Step 2. prove local energy estimates for the new system.
 \end{itemize}

 {\bf Step 1.} In this paper, we consider the 2D ideal compressible MHD system \eqref{MHD} with variables $(u_1,u_2,\varrho,H_1,H_2,S)$:
\begin{equation*}
\left\{\begin{split}
&\partial_t\varrho+\nabla\cdot(\varrho u)=0,\\
&\varrho\{\partial_t+(u\cdot\nabla)\}u+\nabla p+\frac{\mu_0}{2} \nabla |H|^2-\mu_0 (H \cdot \nabla) H=0,\\
&\partial_t H+(u\cdot \nabla)H-(H\cdot \nabla) u +H (\nabla\cdot u)=0,\\
&\partial_t S+(u\cdot\nabla)S=0,\\
&\nabla\cdot H=0,\\
&p=A\varrho^\gamma e^S.
\end{split}
\right.
\end{equation*}

We can transform the system \eqref{MHD} into a symmetric hyperbolic system by introducing new variables $U=(u_1,u_2,p-A,H_1,H_2,S)$ for the following system:
\begin{equation}\label{pMHD fsp}
\left\{\begin{split}
&\varrho\{\partial_t+(u\cdot\nabla)\}u+\nabla p+\frac{\mu_0}{2} \nabla |H|^2-\mu_0 (H \cdot \nabla) H=0,\\
&\frac{1}{\varrho c^2}\partial_t p+\nabla\cdot u+\frac{1}{\varrho c^2}(u\cdot\nabla)p=0,\\
&\mu_0\partial_t H+\mu_0(u\cdot \nabla)H-\mu_0(H\cdot \nabla) u +\mu_0H (\nabla\cdot u)=0,\\
&\partial_t S+(u\cdot\nabla)S=0.
\end{split}
\right.
\end{equation}
With these new variables, the above system \eqref{pMHD fsp} becomes
\begin{equation}\label{hpform}
A_0\partial_tU+A_1\partial_{x_1}U+A_2\partial_{x_2}U=0,
\end{equation}
where
\begin{equation*}
\begin{split}
&A_0=\text{diag}\{\varrho,\varrho,\frac{1}{\varrho c^2},\mu_0,\mu_0,1\},\\
&A_1=\begin{pmatrix}
\varrho u_1 & 0 &1 &0 & \mu_0H_2 &  0 \\
0 & \varrho u_1 & 0 & 0&-\mu_0H_1 & 0 \\
1 & 0 & \frac{u_1}{\varrho c^2} & 0  & 0 &0\\
0&0&0&\mu_0 u_1&0&0\\
\mu_0 H_2 & -\mu_0H_1 & 0 &0&\mu_0u_1 & 0\\
0 & 0  & 0 &0& 0 &  u_1
\end{pmatrix},\\
&A_2=\begin{pmatrix}
\varrho u_2 & 0   &0 & -\mu_0H_2 &  0&0 \\
0 & \varrho u_2 &1&\mu_0H_1 & 0&0 \\
0 & 1 & \frac{u_2}{\varrho c^2} & 0  & 0 &0\\
-\mu_0 H_2 & \mu_0H_1 &  0&\mu_0u_2&0 & 0\\
0&0&0&0&\mu_0 u_2 &0\\
0 & 0  & 0 &0& 0 &  u_2
\end{pmatrix}.\\
\end{split}
\end{equation*}
When there is no vacuum region (i.e., $\varrho>0$), $A_0$ is a positive diagonal matrix, $A_1$ and $A_2$ are symmetric matrices.

{\bf Step 2.}
Define $Q(\lambda,\xi)=\lambda I-\xi_1A_0^{-1}A_1-\xi_2A_0^{-1}A_2$, where $\xi=(\xi_1,\xi_2)\in S^1=\{\xi\in\mathbb{R}^2:|\xi|=1\}$ and $\lambda\in\mathbb{R}$. For all $\xi$, the characteristic equation
\begin{equation}
\det Q(\lambda, \xi)=0
\end{equation}
has six real roots $\lambda_1(\xi),\cdots,\lambda_6(\xi)$. In detail, we have
\begin{equation}
\begin{split}
&\lambda_1=\xi\cdot u+\frac{\mu_0 H^2+\varrho c^2+\sqrt{(\mu_0 H^2+\varrho c^2)^2-4\mu_0\varrho c^2(\xi\cdot H)^2}}{2\varrho},\\
&\lambda_2=\xi\cdot u+\frac{\mu_0 H^2+\varrho c^2-\sqrt{(\mu_0 H^2+\varrho c^2)^2-4\mu_0\varrho c^2(\xi\cdot H)^2}}{2\varrho},\\
&\lambda_3=\lambda_4=\xi\cdot u,\\
&\lambda_5=\xi\cdot u-\frac{\mu_0 H^2+\varrho c^2-\sqrt{(\mu_0 H^2+\varrho c^2)^2-4\mu_0\varrho c^2(\xi\cdot H)^2}}{2\varrho},\\
&\lambda_6=\xi\cdot u-\frac{\mu_0 H^2+\varrho c^2+\sqrt{(\mu_0 H^2+\varrho c^2)^2-4\mu_0\varrho c^2(\xi\cdot H)^2}}{2\varrho},\\
\end{split}\end{equation}
where $H^2=H_1^2+H_2^2$. Note that $\lambda_1$ and $\lambda_6$ are the largest and smallest of the six roots, respectively. For quasilinear symmetric hyperbolic system, according to \cite{sideris}, the finite propagating-speed property holds. Here, in particular for the 2D ideal MHD system, we summarize the proof in the following proposition.
\begin{prop} 
Let $U(x_1,x_2,t)$ be a $C^1$ function on $\mathbb{R}^2\times[0,T]$. 
Define
$$\Omega(\xi,s;T)=\{(x_1,x_2,t): x\cdot \xi\geq s+\lambda_1(\xi)t,\  0\leq t\leq T\}.$$
Suppose $U(x_1,x_2,t)$ solves system \eqref{pMHD fsp} in $\Omega$ with initial data $U_0(x_1,x_2)=0$. Then, we have $U(x_1,x_2,t)\equiv0$ throughout $\Omega(\xi,s;T)$.
\end{prop}
{\it Proof.} Without loss of generality, we assume $s=0$. Let $(x_0,t_0)\in\Omega(\xi,0;T)$. Take $\xi=e_1,e_2$, where $e_1=(1,0)$ and $e_2=(0,1)$.
Define
$$E(t)=\{x:\lambda_6(e_i)(t_0-t)\leq(x_0-x)\cdot e_i\leq\lambda_1(e_i)(t_0-t);\ i=1,2\}$$
with $0\leq t\leq t_0$. Thus, we have that
$$E=\{(x,t): x\in E(t), 0\leq t\leq t_0\}$$
is a pyramid with vertex $(x_0,t_0)$, which is contained entirely within $\Omega(\xi,0;T)$.
The outer normal to $E$ along the lateral surface $$E_i^+=\{(x,t):(x_0-x)\cdot e_i=\lambda_1(e_i)(t_0-t)\}$$
is $$n_i^+=(-e_i,\lambda_1(e_i)).$$
Respectively, along the lateral surface $$E_i^-=\{(x,t):(x_0-x)\cdot e_i=\lambda_6(e_i)(t_0-t)\}$$
the outer normal to $E$ is $$n_i^-=(e_i,-\lambda_6(e_i)).$$
We consider the linear partial differential operator around $U=0$, which corresponds to a non-vacuum state, i.e., $\varrho=1$,
$$P=\bar{A_0}\partial_t+\bar{A_1}\partial_{x_1}+\bar{A_2}\partial_{x_2},$$
where $\bar{A_i}=A_i(0)$ $(i=0,1,2)$. Since $\bar{A_i}$ are symmetric, there holds the following energy identity
$$(PU,U)=\frac12\Big\{\partial_t(\bar{A_0}U,U)+\partial_{x_1}(\bar{A_1}U,U)+\partial_{x_2}(\bar{A_2}U,U)\Big\}.$$
Since $U=0$ on $E(0)$, by the divergence theorem, for $0<t<t_0$, we have
\begin{equation}\label{energy}
\begin{split}
2\int_0^t\int_{E(\tau)}(PU,U)dxd\tau=&\int_{E(t)}(\bar{A_0}U,U)_{\tau=t}dx+\sum_{i=1,2}\int_0^t\int_{E_i^+}(\bar{A_0}Q(\lambda_1,e_i)U,U)d\sigma_i^+d\tau\\
&-\sum_{i=1,2}\int_0^t\int_{E_i^-}(\bar{A_0}Q(\lambda_6,e_i)U,U)d\sigma_i^-d\tau
\end{split}
\end{equation}
with $d\sigma_i^{\pm}$ denoting the surface element on $E_i^{\pm}$.

Note that $\bar{A_0}$ is positive, there hold $\bar{A_0}Q(\lambda_1,e_i)\geq 0$ and  $\bar{A_0}Q(\lambda_6,e_i)\leq 0$. So from \eqref{energy}, we conclude that
\begin{equation}\label{energycontrol}
\int_{E(t)}|U(t)|^2dx\leq 2a\int_0^t\int_{E(\tau)}(PU,U)dxd\tau,
\end{equation}
where $\frac1a$ is the lower bound for $\bar{A_0}$.

Employing $A_0\partial_tU+A_1\partial_{x_1}U+A_2\partial_{x_2}U=0$, in $\Omega(\xi,0;T)$ (which contains $E$), we have that,
\begin{equation}
PU=(\bar{A_0}-A_0)\partial_tU+(\bar{A_1}-A_1)\partial_{x_1}U+(\bar{A_2}-A_2)\partial_{x_2}U.
\end{equation}
Because $U$ is $C^1$ in $E$, by mean value theorem, it holds in $E$ that
\begin{equation}
|(PU,U)|\leq C_E|U|^2,
\end{equation}
where $C_E$ is a constant depending on the magnitude of $U$ and its derivatives inside $E$.
Hence, from \eqref{energycontrol} it follows that
\begin{equation}
\int_{E(t)}|U(t)|^2dx\leq C_E\cdot a\int_0^t\int_{E(\tau)}|U(\tau)|^2dxd\tau,\qquad\text{for all}\  0\leq t\leq t_0.
\end{equation}
Therefore, by Gr\"{o}nwall's inequality, we have $U\equiv0$ on all of $E$.

\subsection*{Conflict of interest statement}
The authors declare that they have no conflict of interest.

\subsection*{Data availability statement}
Data sharing is not applicable to this article as no datasets were generated or analyzed during the current study.

\end{document}